\documentclass[11pt]{amsart}
\usepackage{amsmath,amssymb,amsthm,mathtools}
\usepackage{enumitem, subcaption}
\usepackage{hyperref}
\hypersetup{colorlinks=true,linkcolor=blue,citecolor=blue,urlcolor=blue}

\theoremstyle{plain}
\newtheorem{theorem}{Theorem}
\newtheorem{proposition}[theorem]{Proposition}
\newtheorem{lemma}[theorem]{Lemma}
\newtheorem{corollary}[theorem]{Corollary}
\theoremstyle{definition}
\newtheorem{assumption}{Assumption}
\newtheorem{remark}[theorem]{Remark}

\newcommand{\R}{\mathbb R}
\newcommand{\C}{\mathbb C}
\newcommand{\E}{\mathbb E}
\newcommand{\Prob}{\mathbb P}

\newcommand{\tr}{\operatorname{tr}}

\newcommand{\Law}{\operatorname{Law}}
\newcommand{\rank}{\operatorname{rank}}

\newcommand{\supp}{\operatorname{supp}}
\newcommand{\Id}{I}
\newcommand{\weak}{\Rightarrow}
\newcommand{\as}{\xrightarrow{\mathrm{a.s.}}}
\newcommand{\Imag}{\operatorname{Im}}
\newcommand{\Real}{\operatorname{Re}}
\newcommand{\op}{\mathrm{op}}
\newcommand{\HS}{\mathrm{HS}}

\title{Limiting ESD for a weighted sample covariance matrix.}
\author{Sandrine P\'ech\'e} \address{Universit\'e Paris Diderot, LPSM, 5 Rue Thomas Mann, 75013 Paris, France}
\email{peche@lpsm.paris}
\date{}

\begin{document}

\begin{abstract}
We study the empirical spectral distribution (ESD) of the random matrix 
$M_N=\frac1N\sum_{i=1}^N f(|\pi Y_i|^2)\,Y_iY_i^\top$,
where $\pi$ is a rank $k$ orthogonal projector on $\R^p$, $Y_1,\dots,Y_N$ are i.i.d. standard Gaussian vectors, and $k\to \infty$, $p/N\to \gamma$. We here assume that $f(x)=g(x/k)$ for some function $g$ discontinuous at $x=1$. We show that the asymptotic ESD is characterized by a  Marchenko--Pastur equation, independently of $k$. We also show that the result extends to sub Gaussian random vectors $Y_i$ and to the case where the projector is replaced with a more general weight $\phi(Y_i)$. Last, in the Gaussian case, we study the emergence of outliers  due to the weighting.  
\end{abstract}
\maketitle
\section{Model and results}
The study of sample covariance matrices is now well developed. Starting from the seminal result of \cite{MarcenkoPastur}, there has been a huge literature on the study of the empirical spectral distribution (ESD) of sample covariance matrices (see \cite{Silverstein95}). Since then, enormous progress has been accomplished. Local laws have been proved starting with \cite{KnowlesYau} for standard sample covariance matrices and  \cite{KnowlesYin} for non white sample covariance matrices (see also e.g. \cite{AltErdos} in the case of a variance profile, \cite{Paquette}  in the case of non separable sample covariance matrices) and the list is non exhaustive. Local laws have several important consequences. Based on local laws, one can deduce the delocalization of eigenvectors and prove the universality of local eigenvalue statistics. This is thus the most complete description of the spectral properties of sample covariance matrices, which are now well understood.  Most of the recent literature tries to remove the assumption that the entries of the sample covariance matrix $\frac{1}{N} XX^\top$ are i.i.d. random variables. In particular, it is of interest to introduce some dependency among the entries. 
There are multiple ways to do so ( see e.g. \cite{YinKrishnaiah}, \cite{BaiZhou}, \cite{Paul}). Robust estimators of sample covariance matrices may fall into this class of random matrices and are motivated by mathematical statistics questions. We refer the reader to \cite{Couillet1}, \cite{Couillet2}, \cite{Couillet3} for an example of such matrices in the context of random matrix theory. Therein, the authors study matrices of the form $\Sigma=1/N \sum_{i=1}^N u(x_i^t \Sigma^{-1} x_i)x_ix_i^t $ and obtain the limiting ESD for such Maronna's estimators. The function $u$ introduces some strong dependency on the entries here. 
In this article,  we study a related model, namely weighted sample covariance matrices of the form
$$M_N=\frac{1}{N}\sum_{i=1}^N f(|\pi Y_i|^2)Y_iY_i^t$$ for some  vectors $Y_i$ with i.i.d. coordinates and where $\pi$ is a rank $k$ projector on some subspace. Such matrices may be of interest in mathematical statistics (see e.g. \cite{Li}): the impact of a vector is weighted according to the projection on some fixed subspace $V.$ The weight is then generalized to be some function of the vector $Y_i$ only in the sequel of the article. We may refer to \cite{Engelen} where some similar procedure is used in the context of robust estimation of covariance. What is then the resulting spectrum of the associated sample covariance matrix? Of course, the dependency induced by the function $f$ prevents from using directly the established results for standard sample covariance matrices. One natural question is to question the role of the dimension $k$ (in particular in the large $k$ limit) as well as the geometry of the subspace $V$. We here propose a first study of such random matrix ensembles.

In the article, we first consider the following Gaussian model:
\begin{assumption}\label{ass:gauss}
\begin{enumerate}
\item $Y_1,\dots,Y_N\stackrel{iid}\sim \mathcal N(0,I_p)$.
\item $\pi$ is the orthogonal projector on the first  $k$ coordinates of  $\R^p$. Here, by the rotational invariance of the Gaussian distribution, every rank $k$ projector can be reduced to this case by an orthonormal change of basis. 
\item $N,p\to\infty$ with $p/N\to \gamma \in(0,\infty)$ and $k\to \infty$.
\item $g:\R_+\to S$ with $S$ a compact subset of $\R_+^*$; $g$ continuous on  $\R_+\setminus\{1\}$, with left and right limits at $x=1$:
$$g(1^-)=a,\qquad g(1^+)=b,\qquad a\ne b,\qquad a,b> 0.$$
We now set $f=f_N(x):=g(x/k)$ and $w_i:=f(|\pi Y_i|^2)=g(\zeta_i/k)$, $\zeta_i:=|\pi Y_i|^2$.
\end{enumerate}
\end{assumption}
We consider the weighted sample covariance matrix $$M_N=\frac1N\sum_{i=1}^N g(|\pi Y_i|^2/k)\,Y_iY_i^\top.$$

We prove the following result: let $\lambda_1\geq \lambda_2\geq \cdots\geq \lambda_p$ be the eigenvalues of $M_N$. We denote its spectral measure by $\mu_{M_N}=\frac{1}{p}\sum_{i=1}^p \delta_{\lambda_i}.$
\begin{theorem}\label{thm:gauss}
Assume that Assumption~\ref{ass:gauss} holds true, and let $H:=\frac12\delta_a+\frac12\delta_b$. Almost surely, $\mu_{M_N}\Rightarrow\mu$, whose Stieltjes transform $m(z)=\int\frac{d\mu(x)}{x-z}$ satisfies, for $z\in\C^+$,
\begin{equation}\label{eq:main}
m(z)=-\Big(z-\int\frac{t\,dH(t)}{1+\gamma\,t\,m(z)}\Big)^{-1}.
\end{equation}
\end{theorem}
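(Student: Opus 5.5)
Write $Y_i=(U_i,V_i)$ with $U_i=\pi Y_i\in\R^k$ and $V_i\in\R^{p-k}$ independent, so that the weight $w_i=g(|U_i|^2/k)$ depends only on $U_i$. Since $|U_i|^2$ is $\chi^2_k$ and $(|U_i|^2-k)/\sqrt{2k}\weak\mathcal N(0,1)$, we get $\Prob(\zeta_i<k)\to\frac12$. Because $g$ is continuous away from $1$ and $|U_i|^2/k\to1$ in probability, $w_i$ takes values close to $a$ on $\{\zeta_i<k\}$ and close to $b$ on $\{\zeta_i>k\}$, up to errors that vanish in probability. Hence the empirical distribution $\frac1N\sum_i\delta_{w_i}$ converges almost surely (by i.i.d. LLN on the indicators and on $g(\zeta_i/k)$ bounded) to $H=\frac12\delta_a+\frac12\delta_b$. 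This explains where $H$ comes from.

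The plan is to show that $M_N$ has the same limiting ESD as $\tilde M_N=\frac1N\sum_i w_i \tilde Y_i\tilde Y_i^\top$, where $\tilde Y_i$ are Gaussian vectors independent of $(w_i)$. For $\tilde M_N$, conditionally on the weights, the classical Marchenko--Pastur/Silverstein result for $\frac1N X W X^\top$ with $W=\mathrm{diag}(w_i)$ deterministic, bounded, and with ESD converging to $H$ gives exactly \eqref{eq:main}.

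To make the comparison I would not use $\tilde Y$ directly. Instead I would split $M_N$ into blocks. The lower-right $(p-k)\times(p-k)$ block is $\frac1N\sum_i w_iV_iV_i^\top$ with $V_i$ independent of $w_i$, so it falls exactly under the Silverstein theorem. If $k/p\to0$, the rest of the matrix is a perturbation of rank at most $2k=o(p)$, and the rank inequality $\|F^{A}-F^{B}\|_\infty\le\rank(A-B)/p$ finishes the argument.

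The main obstacle is the regime $k/p\to c>0$, where the $U$ block is not negligible. Here I would argue through the resolvent and the standard leave-one-out/Schur complement derivation. The key quantities are the quadratic forms $\frac1N Y_i^\top(M^{(i)}-z)^{-1}Y_i$. By concentration of quadratic forms, these concentrate around $\frac1N\tr(M^{(i)}-z)^{-1}$ conditionally on $w_i$, uniformly in the value of $w_i$. The dependence of $Y_i$ on $w_i$ is only through $|U_i|^2$, i.e. the radial part of $U_i$. So I would write $U_i=|U_i|\theta_i$ with $\theta_i$ uniform on the sphere and independent of $|U_i|$, hence of $w_i$. Then
\[
\tfrac1N U_i^\top A U_i=\tfrac{|U_i|^2}{k}\cdot\tfrac kN\,\theta_i^\top A\theta_i ,
\]
with $|U_i|^2/k\to1$ uniformly over $i$ almost surely, since the $\chi^2_k$ tails are $e^{-ck\epsilon^2}$ and $k\to\infty$. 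I would need $\log N=o(k)$ for a union bound; otherwise I would use an $L^2$ average over $i$, which suffices for the Stieltjes transform. The spherical quadratic form concentrates around $\tr A/k$, and the cross terms $U_i^\top A V_i$ are handled similarly. Thus each $\frac1N Y_i^\top R^{(i)}Y_i\approx\frac1N\tr R$ up to $o(1)$ in $L^2$.

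Plugging this into the identity $1=\frac1p\tr\bigl((M-z)R\bigr)$ gives
\[
\E m_N=-\Bigl(z-\frac1N\sum_i\frac{w_i}{1+\gamma w_i m_N}\Bigr)^{-1}+o(1).
\]
Concentration of $m_N$ via martingale differences (bounded rank-one updates, giving an $O(1/p^2)$ fourth moment and hence almost sure convergence by Borel--Cantelli), the convergence of the weights to $H$, and uniqueness of the solution of \eqref{eq:main} in $\C^+$ then yield the theorem. The regime where the weights are only close to $a$ or $b$ near the discontinuity needs no special care, because all that enters the argument is the empirical distribution of the $w_i$.
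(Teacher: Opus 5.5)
Your argument is correct in outline, but it follows a different route from the paper's.

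\textbf{The paper's route.} The paper uses exactly the coupling you set aside. Writing $X_i=R_iU_i$, with $U_i$ uniform on the sphere and independent of $R_i$, it replaces $R_i$ by an independent copy $R_i'$. Then $\tilde Y_i=(R_i'U_i,Z_i)$ is standard Gaussian and independent of $w_i$. Lemma~\ref{lem:bai} gives $L^4(\mu_{M_N},\mu_{M_N''})\le \frac{2}{p^2}\cdot O(p)\cdot\frac1N\sum_i w_i(R_i-R_i')^2=O(1/p)$, because $\E(R_i-R_i')^2=2\mathrm{Var}(R_i)\to1$. A Rosenthal/Borel--Cantelli step makes this almost sure, and Silverstein's theorem for $M_N''$ concludes. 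The point is that the radius has only $O(1)$ fluctuations, so swapping it is cheap in Frobenius norm. You already had the needed ingredient in your $\theta_i$ independent of $|U_i|$. This route takes a few lines, needs no case split on $k/p$, and is quantitative ($L=O(p^{-1/4})$).

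\textbf{Your route.} You re-derive the Marchenko--Pastur equation by a leave-one-out resolvent analysis. This is essentially the paper's proof of Theorem~\ref{thm: uni} in Section~\ref{sec: uni}. It is longer, but it does not rely on rotational invariance, and it can be simplified:
\begin{enumerate}
\item The case split is unnecessary, since the resolvent argument never uses $k\asymp p$.
\item The polar decomposition and the conditioning on $w_i$ are also unnecessary. Since $Y_i$ is independent of $R^{(i)}$, plain Hanson--Wright gives $\max_i|Y_i^\top R^{(i)}Y_i-\tr R^{(i)}|=O(\sqrt{p\log N})$ with high probability. The weight $w_i$ enters the Sherman--Morrison fraction only as a bounded factor, and afterwards only through the empirical distribution of the weights.
\end{enumerate}

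\textbf{Two small corrections.} First, your claim that the quadratic forms concentrate conditionally on $w_i$, uniformly in its value, is false in general. If $g$ takes some value only where $\zeta/k$ stays away from $1$, conditioning on that value shifts $\frac1N U_i^\top AU_i$ by about $(\zeta_i/k-1)\frac1N\tr A$, which is not small when $k\asymp N$. However, the estimate you actually use is unconditional, so nothing breaks. Second, $(w_i)$ is a triangular array, so to get almost sure convergence of $\frac1N\sum_i\delta_{w_i}$ to $H$ you should use Hoeffding plus Borel--Cantelli rather than the strong law.
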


\begin{remark}
It is a notable fact that $\alpha=\lim k/N$ does not play a role in the equation: the rank of the projector $\pi$ impacts the limiting empirical distribution only through the mechanism defining the limiting distribution $H$ (see Lemma~\ref{lem:weight}). Thus $m(z)$ does not depend on $k$ when $\gamma$, $a$, $b$ are fixed. 
\end{remark}
\begin{remark}
The columns $\sqrt{w_i}\,Y_i$ are i.i.d., so $M_N$ is a sample covariance matrix with independent columns. However, the concentration condition of \cite{BaiZhou} Theorem 1.1 on quadratic forms fails here, since $\operatorname{Var}(w_iY_i^\top BY_i)\asymp\operatorname{Var}(w)\,(\operatorname{tr}B)^2$ may be of order $p^2$ rather than $o(p^2)$ for some deterministic matrix $B$ ($B$ is the identity matrix e.g.). Correspondingly, the limit is the Marchenko–Pastur law with weight distribution $H$, not the one with population covariance $\mathbb \E[w] Id$.
\end{remark}

We also study the case where $k$ is fixed. In this case, the limiting ESD is well-known.

Define
\[
\nu_k:=\Law\left(g\left(\frac{\chi_k^2}{k}\right)\right).
\]

\begin{proposition}
    
\label{thm:kfixe}
Assume that $k$ is fixed, $p/N\to \gamma \in(0,\infty)$, and that
$g:\R_+\to S$, with $S$ a compact subset of $\R_+^*$; $g$ is Borel measurable. Then, almost surely,
\[
\mu_{M_N}\weak\mu_{\gamma,\nu_k},
\]
where $\mu_{\gamma,\nu_k}$ is the generalized Marchenko--Pastur law with
aspect ratio $\gamma$.
The Stieltjes transform $m$ of $\mu_{\gamma,\nu_k}$ is the unique solution in $\C^+$ of
$$m(z)=-\left (z-\int \frac{t}{1+\gamma tm(z)} d\nu_k(t)\right )^{-1}.$$
\end{proposition}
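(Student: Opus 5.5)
Since $k$ is fixed, the weights depend only on the first $k$ coordinates of each $Y_i$. This suggests splitting $Y_i=(U_i,Z_i)$ with $U_i=\pi Y_i\in\R^k$ and $Z_i\in\R^{p-k}$ the remaining coordinates. Then $U_i$ and $Z_i$ are independent, and $w_i=g(|U_i|^2/k)$ is a function of $U_i$ alone. So $(w_i)_i$ is an i.i.d. sequence with law $\nu_k$, independent of the Gaussian matrix $Z=(Z_1,\dots,Z_N)$.

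\emph{Step 1 (finite-rank reduction).} Write $M_N=\frac1N\sum_i w_i Y_iY_i^\top$. Let $\widetilde M_N:=\frac1N\sum_i w_i \tilde Z_i\tilde Z_i^\top$, where $\tilde Z_i=(0,Z_i)$ is $Y_i$ with its first $k$ coordinates zeroed. The difference $M_N-\widetilde M_N$ is supported on rows or columns indexed by $\{1,\dots,k\}$, so its rank is at most $2k$. By the rank inequality for ESDs,
\[
\sup_x|F^{M_N}(x)-F^{\widetilde M_N}(x)|\le \frac{2k}{p}\to 0 .
\]
It therefore suffices to treat $\widetilde M_N$. Its nonzero block is $\frac1{N} Z W Z^\top$ of size $(p-k)\times(p-k)$, with $W=\mathrm{diag}(w_i)$, and the extra $k$ zero eigenvalues do not matter.

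\emph{Step 2 (conditioning on the weights).} Condition on the $\sigma$-field of $(U_i)_i$. Then $Z$ is a $(p-k)\times N$ standard Gaussian matrix independent of $W$. The matrix $\frac1N ZWZ^\top$ is the separable model $\frac1N X T X^*$ with $T$ diagonal, deterministic given the conditioning, and uniformly bounded because $S$ is compact. By the strong law of large numbers, almost surely $\frac1N\sum_i\delta_{w_i}\weak\nu_k$, with no continuity of $g$ needed since only a.s. convergence of empirical measures is used. On this full-measure event, apply the Marchenko--Pastur/Silverstein theorem for $\frac1N XTX^*$ (Silverstein \cite{Silverstein95}, or Bai--Silverstein) with ratio $(p-k)/N\to\gamma$. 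This gives a.s. convergence (in $Z$) of the ESD to the law determined by the dual equation. Because the conditioning event has probability one, Fubini makes the convergence hold almost surely jointly.

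\emph{Step 3 (identifying the equation).} The cited theorem is usually stated for $\frac1N T^{1/2}X^*XT^{1/2}$ ($N\times N$) or with the roles of $p$ and $N$ exchanged. I will translate it via the companion relation between the Stieltjes transforms of the $p\times p$ and $N\times N$ matrices, namely $\underline m=\gamma m-(1-\gamma)/z$. The aim is to check that the $p\times p$ ESD has Stieltjes transform solving $m=-\big(z-\int \frac{t\,d\nu_k(t)}{1+\gamma t m}\big)^{-1}$, which has the same form as \eqref{eq:main}. Uniqueness in $\C^+$ comes from the standard argument, the cited theorem, because $\nu_k$ has compact support in $\R_+^*$. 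Keeping the normalizations straight in this translation is where I expect the only real care to be needed; the rest is standard.
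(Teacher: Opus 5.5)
Your proposal is correct and follows the paper's proof: both reduce to $\mathrm{diag}\bigl(0_{k\times k},\tfrac1N ZW_NZ^\top\bigr)$ via the rank-$2k$ bound $\sup_x|F^{M_N}(x)-F^{\widehat M_N}(x)|\le 2k/p$, and then apply Silverstein's theorem to the block whose i.i.d.\ weights with law $\nu_k$ are independent of $Z$. Your explicit conditioning on the weights, the remark that Borel measurability of $g$ suffices, and the planned normalization check in Step 3 (which does produce the stated equation) are details the paper leaves implicit.
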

In other words, the bulk of the spectrum is not impacted by the dependence induced by the weighting. \\
In this setting, the projection $\pi$, or the weighting, may result into the existence of outliers. Note that the behavior of extreme eigenvalues in the context of dependent entries has already been studied in \cite{Chafai} or \cite{Basrak} e.g. The emergence of outliers is a new phenomenon to our knowledge. We study the possible locations of such outliers in the next proposition. \\
We write $Y_i=\begin{pmatrix}X_i \cr Z_i\end{pmatrix}$ where $X_i \in \R^k$ and $Z_i\in \R^{p-k}. $ We also set $W_N=\text{diag}(w_1, \ldots, w_N), Z=[Z_1, \ldots, Z_N]$
\begin{proposition}\label{prop: outlier}
Every possible limiting outlier location
$z\notin \text{supp}(\mu_{\gamma, \nu_k})\cup\{0\}$  must satisfy
\begin{equation} \label{eq: outliers}
z=\frac{1}{k}\E\left[
\frac{\chi_k^2g(\chi_k^2/k)}
{1+\gamma m(z)g(\chi_k^2/k)}
\right].
\end{equation}
\end{proposition}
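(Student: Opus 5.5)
We propose to use the block decomposition $Y_i=(X_i,Z_i)$ and a Schur complement. With $X=[X_1,\dots,X_N]\in\R^{k\times N}$ and $Z$ as above, write
\[
M_N=\frac1N\begin{pmatrix} XW_NX^\top & XW_NZ^\top\\ ZW_NX^\top & ZW_NZ^\top\end{pmatrix}.
\]
The point is that $W_N$ depends only on $X$, so conditionally on $X$ the matrix $Z$ is an independent Gaussian matrix. The lower-right block $\frac1N ZW_NZ^\top$ is therefore a generalized Marchenko--Pastur matrix with i.i.d. weights of law $\nu_k$, since the $w_i$ are i.i.d. functions of the $X_i$. Because $k$ is fixed and $(p-k)/N\to\gamma$, standard no-outlier results (Bai--Silverstein exact separation, applied conditionally on $W_N$ with compactly supported weights) give that, almost surely, for $N$ large it has no eigenvalue at distance $\ge\varepsilon$ from $\supp(\mu_{\gamma,\nu_k})\cup\{0\}$. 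So for $z$ outside this set, $z$ is an eigenvalue of $M_N$ iff
\[
\det\Big(\tfrac1N XW_NX^\top - z - \tfrac1{N^2}XW_NZ^\top\big(\tfrac1N ZW_NZ^\top-z\big)^{-1}ZW_NX^\top\Big)=0 .
\]
This is a fixed $k\times k$ determinant.

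Next, simplify the $k\times k$ matrix with the push-through identity $Z^\top(\frac1N ZWZ^\top-z)^{-1}Z = N\,\big(W^{-1}\cdot\big)$-type relations. Concretely, set $A=W^{1/2}Z^\top Z W^{1/2}/N$ ($N\times N$). Then
\[
\tfrac1N W^{1/2}Z^\top\big(\tfrac1N ZWZ^\top-z\big)^{-1}ZW^{1/2}=A(A-z)^{-1}=I+z(A-z)^{-1},
\]
so the Schur complement becomes
\[
-z\Big(I_k+\tfrac1N XW^{1/2}(A-z)^{-1}W^{1/2}X^\top\Big).
\]
Since $z\neq0$, outliers are the limits of zeros of $\det\big(I_k+\frac1N XW^{1/2}(A-z)^{-1}W^{1/2}X^\top\big)$. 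The key step is a deterministic equivalent. Conditionally on $X$, the matrix $(A-z)^{-1}$ has an isotropic/diagonal local law: its diagonal entries satisfy
\[
\big[(A-z)^{-1}\big]_{ii}\approx \frac{-1}{z\,(1+\gamma m(z) w_i)},
\]
which is the standard formula for the companion resolvent of the weighted MP model. Here $m$ is the Stieltjes transform of $\mu_{\gamma,\nu_k}$ from Proposition~\ref{thm:kfixe}, extended analytically off the support. Off-diagonal contributions are averaged against the vectors $W^{1/2}X^\top e_j$.

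Here is the main obstacle: $X$ is \emph{not} independent of $W$, so the quadratic forms $\frac1N\sum_{i,l} X_{ji}\sqrt{w_i}\,[(A-z)^{-1}]_{il}\sqrt{w_l}\,X_{j'l}$ cannot be handled by naive independence. I would instead exploit the fact that $A$ depends on $X$ only through $W$, and that conditionally on $W$ the law of $Z$ is invariant under right multiplication by permutations and sign changes. More usefully, expand $(A-z)^{-1}$ and use the sign symmetry $X_i\mapsto -X_i$, which leaves $w_i$ unchanged, so that off-diagonal terms have zero mean. Their variance is $O(1/N)$ by bounded weights and resolvent bounds. This yields
\[
\tfrac1N XW^{1/2}(A-z)^{-1}W^{1/2}X^\top \approx -\frac1{zN}\sum_i \frac{w_iX_iX_i^\top}{1+\gamma m(z)w_i}\ \to\ -\frac1z\,\E\Big[\frac{g(|X|^2/k)XX^\top}{1+\gamma m(z)g(|X|^2/k)}\Big]
\]
almost surely, uniformly on compacts away from the support, by the strong LLN and Vitali/Montel.

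By rotational invariance of $X\sim\mathcal N(0,I_k)$ and the fact that $g(|X|^2/k)$ is radial, the last expectation equals $c(z)I_k$ with
\[
c(z)=\frac1k\E\Big[\frac{\chi_k^2g(\chi_k^2/k)}{1+\gamma m(z)g(\chi_k^2/k)}\Big].
\]
Hence the limiting determinant is $(1-c(z)/z)^k$. By Hurwitz's theorem applied to the analytic (in $z$) random determinants converging uniformly, any limiting outlier $z$ must satisfy $z=c(z)$, which is \eqref{eq: outliers}.
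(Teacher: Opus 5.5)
Your argument is correct and shares the paper's skeleton: the same Schur complement (Lemma~\ref{prop:schur}), the same push-through identity $K_N(z)=-z\bigl[I_k+\frac1N XW_N^{1/2}T_N(z)W_N^{1/2}X^\top\bigr]$ (Proposition~\ref{prop:companion}), the same deterministic equivalent $-\frac1z(I_N+\gamma m W_N)^{-1}$, and the same strong law plus rotational invariance step giving $\rho(z)I_k$. The difference lies in how you control the bilinear forms $\frac1N u_j^\top T_N u_{j'}$ with $u_j=W_N^{1/2}X^\top e_j$.

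\begin{itemize}
\item The paper conditions on $X$ and applies the Knowles--Yin anisotropic local law to these frozen vectors.
\item You split into diagonal and off-diagonal parts. You use the entrywise law $[T_N]_{ii}\approx -1/(z(1+\gamma m(z) w_i))$. You kill the off-diagonal part with the sign flip $X_i\mapsto -X_i$: it fixes both $w_i$ and $T_N$, so conditionally the off-diagonal sum is a mean-zero Rademacher chaos.
\end{itemize}

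The obstacle you flag is not actually there. As you note yourself at the start, conditionally on $X$ the vectors $u_j$ are deterministic and $Z$ is an independent Gaussian matrix. That is exactly why the paper can apply the anisotropic law directly; the dependence between $X$ and $W_N$ plays no role.

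Your detour is still valid, and it buys something. It is elementary: Hanson--Wright and Silverstein's theorem at fixed $\Imag z>0$, a moment bound for the chaos, then Vitali. Vitali uses the uniform bound $\|T_N(z)\|\le 1/r$ on a complex $r$-neighbourhood of $K$, which holds once $\widetilde S_N$ has no eigenvalues in $K_{2r}$. The paper instead relies on a heavy local law whose extension to real $z$ it only sketches. The price is that you use the sign symmetry of the law of $X_i$; flipping the columns of $Z$ would serve equally well.

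Two small corrections:
\begin{itemize}
\item The conditional chaos variance is $\frac{2}{N^2}\sum_{i\neq l}|c_{il}|^2\lesssim \frac{\log N}{N}$, not $O(1/N)$, because $\max_l X_{j'l}^2$ enters the bound. This is harmless. Hypercontractivity then gives a summable fourth moment, and Borel--Cantelli (which only needs the laws) yields almost sure convergence.
\item The Bai--Silverstein no-eigenvalue theorem requires $K$ to stay outside $\supp(\rho_N)$ for all large $N$. You should verify this, as the paper does in Lemma~\ref{lem: supprho}.
\end{itemize}
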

For fixed $k$, as will be shown by the proof, the weighting alone may create exactly $k$ outliers, whose location is characterized. This is different from the emergence of outliers in the \cite{BBP} phase transition. \\
The convergence of the outliers requires a real solution to the equation (\ref{eq: outliers}). We can show that the bulk has no
eigenvalues in the chosen spectral gap so that an eigenvalue indeed separates when a real solution exists. 
We discuss here the existence of real solutions. \\
Set $s=\frac{\chi^2(k)}{k}$ with variance $2/k.$ Set also $u=\frac{m(z) g(s)}{1+\gamma m(z)g(s)}.$
The equation for spikes can be rewritten, using the Marchenko-Pastur equation, as 
$$\mathbb{E} ((s-1) u)=-1.$$
Outside the support of $\mu_{\gamma , \nu_k}$, $m(z)$ is real and one has that $z\mapsto m(z)$ is strictly increasing. Deriving the fixed point equation $z(m)=-1/m+\mathbb{E}\frac{g(s)}{1+\gamma m(z)g(s)}$ and writing that $\frac{dz}{dm}>0$ one obtains that $\mathbb{E}(u^2)<\gamma^{-1}.$
By Cauchy-Schwarz, $\left (\mathbb{E}(u(s-1))\right )^2\leq \text{Var}(s) \mathbb E u^2\leq 2/(k\gamma).$
This implies that $k\gamma<2$ is necessary so that a real solution exists. As a consequence there are no spikes in the case where $k\gamma\geq 2.$ \\
More generally, if $\lim_{z \to x_+}\rho (z)-z>0$ where $x_+$ is the right edge of the support of $\mu_{\gamma, \nu_k},$ this ensures the existence of outliers to the right of the support of $\mu_{\gamma, \nu_k}$. If $\lim_{z \to x_+}\rho (z)-z<0$, there is no outlier. 
We cannot handle the case where $\lim_{z \to x_+}\rho (z)-z=0.$

We give some simulation in the case where $k=1$, $\gamma=1/10$ and $g(x)=1_{x\leq 1}+3\cdot 1_{x>1}$ in Figure \ref{simuk=1}. In this case, one has that $k\gamma<2$. The spike is numerically computed around 3.3.  We compare the spectrum of  $M_N$ and that of $S_N=\frac{1}{N}\sum g(|\pi Y_i|^2/k) Y_i^{\mathbf g} (Y_i^{\mathbf g})^\top$ for some independent Gaussian vectors $Y_i^{\mathbf g}.$  We see that a spike may arise after a simulation over 50 (resp. 100) matrices of size $N=500$ $\gamma=1/10$ (resp. $N=4000$) on Figure \ref{simuk=1} (resp. Figure \ref{simuk=2}). 
\begin{figure}
\includegraphics[height=12cm,width=1\textwidth]{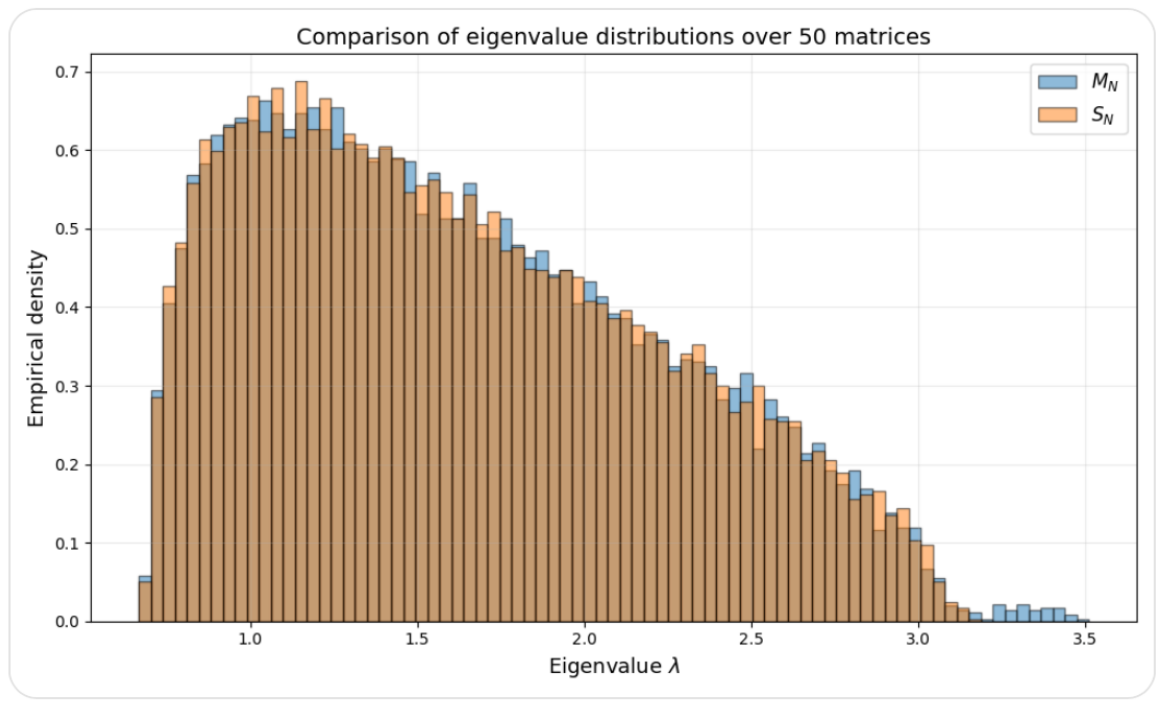}
\caption{A spike when $k=1, N=500$}
\label{simuk=1}
\end{figure}
\begin{figure}
\includegraphics[height=10cm,width=1\textwidth]{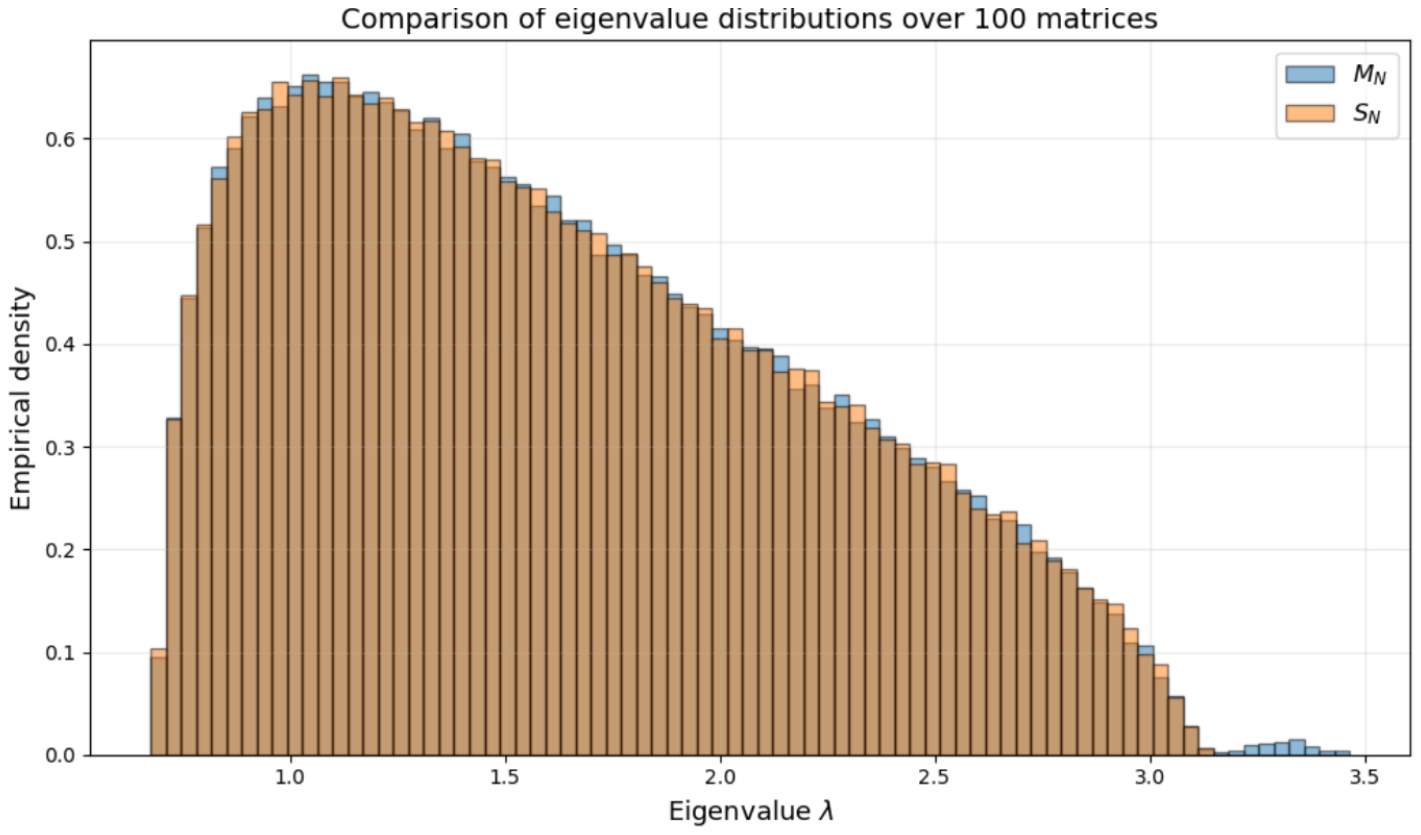}
\caption{A spike when $k=1, N=4000$}
\label{simuk=2}
\end{figure}
A simulation over 2000 matrices shows that the largest eigenvalue of $M_N$ is located at 3.29 with a standard deviation of $0.18$. In comparison, the edge of $S_N$ is located at $z\sim 3.09$ with a calculated mean at $3$ for the same parameters ($g,\gamma$, $N=500$).
\begin{remark}The result extends to  vectors $Y_i$ with i.i.d. symmetrically distributed sub-Gaussian coordinates instead of Gaussian vectors (in the case where $\pi$ is the orthogonal projector on the first $k$ coordinates). The equation for the outlier is then modified as follows: 
$$z=\frac{1}{k}\E\left[
\frac{\zeta_kg(\zeta_k/k)}
{1+\gamma m(z)g(\zeta_k/k)}
\right],$$ where $\zeta_k=\sum_{j\leq k}Y_{1j}^2$. This suggests that the possible location of spikes is highly non universal. The necessary condition for outliers also becomes $k\gamma < \E (Y_{11}^4)-1.$ 
\end{remark}

\vspace*{1cm}Last, we extend Theorem \ref{thm:gauss} to a universality statement. We show that Theorem \ref{thm:gauss}
remains valid if the $Y_i$'s are  sub- Gaussian i.i.d. centered vectors. Furthermore, the needed assumption on the weights is their convergence in distribution.

Consider a weighted sample covariance matrix $M_N=\frac{1}{N}\sum_{i=1}^N \phi(Y_i) Y_iY_i^\top,$ where we  assume that
\begin{assumption}\label{ass:uni}
\begin{enumerate}
\item $Y_1,\dots,Y_N\in\R^p$ are i.i.d.\ random vectors with i.i.d. centered sub-Gaussian coordinates of variance $1$ 
\item $N,p\to\infty$ with $p/N\to \gamma\in(0,\infty)$.
%\item $g:\R\to\R_+$ is bounded, continuous on  $\R\setminus\{1\}$, with left and right limits at $x=1$:
%$$g(1^-)=a,\qquad g(1^+)=b,\qquad a\ne %b,\qquad a,b> 0.$$
\item the weights $w_i:=\phi_N(Y_i)\in [a_{min}, a_{max}]$, $a_{min}>0$ converge in distribution to some probability measure $H$ : $w_i\Longrightarrow H.$
\end{enumerate}
\end{assumption}
We will drop the dependence on $N$ in $\phi_N=\phi$ in the sequel.
\begin{theorem}\label{thm: uni}
Under Assumption \ref{ass:uni}, in probability, $\mu_{M_N}\Rightarrow\mu$, where the Stieltjes transform $m(z)=\int\frac{d\mu(x)}{x-z}$ satisfies (\ref{eq:main}).
\end{theorem}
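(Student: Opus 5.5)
I will prove Theorem~\ref{thm: uni} with the resolvent (Stieltjes transform) method and a leave-one-out decomposition. The columns $\sqrt{w_i}Y_i$ are i.i.d., but the weight $w_i$ depends on $Y_i$. The key point is that the quadratic form $\frac1N Y_i^\top B Y_i$ concentrates around $\frac1N\tr B$ \emph{conditionally on nothing}, regardless of the weight. The weight multiplies this quantity, and because it is bounded it cannot destroy the concentration. So we do not need concentration of $w_iY_i^\top BY_i$. We only need concentration of $Y_i^\top BY_i$ together with boundedness of $w_i$, which is exactly why $H$ (the law of $w$) appears rather than $\E w$.

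\textbf{Resolvent identities.} Fix $z\in\C^+$ and set $G=(M_N-z)^{-1}$ and $G_i=(M_N-\frac1N w_iY_iY_i^\top-z)^{-1}$. By Sherman--Morrison,
\[
G Y_i=\frac{G_iY_i}{1+\frac{w_i}{N}Y_i^\top G_iY_i}.
\]
Writing $I+zG=M_NG=\frac1N\sum_i w_iY_iY_i^\top G$ and taking $\frac1p\tr$ gives
\[
1+z\,m_N(z)=\frac1p\sum_{i=1}^N \frac{\frac{w_i}{N}Y_i^\top G_iY_i}{1+\frac{w_i}{N}Y_i^\top G_iY_i},
\qquad m_N=\tfrac1p\tr G .
\]

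\textbf{Concentration of the quadratic forms.} Next I replace $\frac1N Y_i^\top G_iY_i$ by $\frac pN m_N$. Since $G_i$ is independent of $Y_i$ and $\|G_i\|\le 1/\Imag z$, the Hanson--Wright inequality for sub-Gaussian coordinates gives
\[
\max_i\Big|\tfrac1N Y_i^\top G_iY_i-\tfrac1N\tr G_i\Big|\to0
\]
in probability; the $\max$ over $N$ indices is affordable because the tails are exponential. The rank-one interlacing bound $|\tr G-\tr G_i|\le 1/\Imag z$ then lets me pass from $\frac1N\tr G_i$ to $\frac1N\tr G$. The map $x\mapsto \frac{wx}{1+wx}$ has to be controlled with $w\in[a_{min},a_{max}]$ and $x$ in the region $\{\Imag(x)\ge0\}$ relevant here. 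There $|1+wx|$ is bounded below: when $\Imag z$ is large this is immediate, and in general one uses the standard bound $|1+w\,\frac1N Y^\top G_iY|^{-1}\le |z|/\Imag z$. Combining these,
\[
1+zm_N=\frac{1}{\gamma_N}\cdot\frac1N\sum_i\frac{\gamma_N w_i m_N}{1+\gamma_N w_i m_N}+o_P(1),\qquad \gamma_N=p/N .
\]
The empirical measure $\frac1N\sum_i\delta_{w_i}$ converges weakly in probability to $H$. The $w_i$ are i.i.d. with a law depending on $N$ but converging to $H$, so a triangular-array weak law of large numbers applied to bounded continuous test functions does this. Hence the sum becomes $\int\frac{tm_N}{1+\gamma t m_N}dH(t)+o_P(1)$, and the integrand is uniformly Lipschitz in $t\in[a_{min},a_{max}]$ for $m$ in a compact subset of $\C^+$. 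Rearranging,
\[
m_N=-\Big(z-\int\frac{t\,dH(t)}{1+\gamma t m_N}\Big)^{-1}+o_P(1),
\]
which is the usual algebraic step from $1+zm=\int\frac{tm}{1+\gamma tm}dH$, using $\frac{tm}{1+\gamma t m}=\frac1\gamma\big(1-\frac{1}{1+\gamma tm}\big)$.

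\textbf{Conclusion and main obstacle.} Tightness is clear since $\frac1p\tr M_N\le a_{max}\frac1{pN}\sum_i|Y_i|^2$ is bounded in probability. Any subsequential limit of $m_N(z)$ therefore satisfies \eqref{eq:main}. The Marchenko--Pastur equation with compactly supported $H$ has a unique solution that is a Stieltjes transform, by the Silverstein--Bai uniqueness argument. This gives convergence of $m_N(z)$ in probability for each $z$, hence on a countable dense set, and so weak convergence of $\mu_{M_N}$ in probability. The step I expect to be the main obstacle is justifying the replacement inside the nonlinear map uniformly in $i$, that is, keeping the denominators $1+\frac{w_i}{N}Y_i^\top G_iY_i$ away from $0$. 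The cleanest route is to first work with $\Imag z$ large, where all bounds are trivial, and then extend to all of $\C^+$ by analyticity and Vitali's theorem.
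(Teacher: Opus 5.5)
Your proposal is correct and shares the paper's skeleton: Sherman--Morrison with leave-one-out resolvents, Hanson--Wright plus a union bound for $Y_i^\top G_iY_i$, a rank-one bound on $\tr G-\tr G_i$, the lower bound $|1+wx|\ge \Imag z/|z|$ coming from $\Imag(z(1+wx))\ge\Imag z$ (this is Lemma~\ref{lem:4}), and uniqueness for \eqref{eq:main}.

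The genuinely different step is the averaging over $i$.

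\textbf{The paper's route.} The paper keeps the leave-one-out quantities $\psi_i=w_im_N^{(i)}/(1+w_i\gamma_Nm_N^{(i)})$. It concentrates $\frac1N\sum_i\psi_i$ by McDiarmid (Lemma~\ref{lem:6}) and shows separately that $m_N-\E m_N\to0$. It then identifies $\lim\E\psi_1$ along subsequences, using the exact independence of $w_1$ and $m_N^{(1)}$ (Lemma~\ref{lem:8}).

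\textbf{Your route.} You replace every $m_N^{(i)}$ by the common $m_N$, at cost $O(1/(p\Imag z))$. The sum then becomes $\int h(t,m_N)\,d\nu_N(t)$ with $\nu_N=\frac1N\sum_i\delta_{w_i}$. You then use $\nu_N\Rightarrow H$ in probability together with a Lipschitz bound in $t$ that is uniform over all admissible $m$. This uniformity absorbs the dependence between $m_N$ and the $w_i$ without any independence argument.

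\textbf{What each buys.} Your route gives an approximate fixed-point equation for the random $m_N(z)$ itself. Convergence in probability then follows from a single stability argument: the continuous map $m\mapsto m+(z-\int\frac{t\,dH(t)}{1+\gamma tm})^{-1}$ has a unique zero on a compact subset of $\C^+$ that contains $m_N(z)$ with high probability. One such set is $\{|m|\le1/\Imag z,\ \Imag m\ge c\}$, with $c>0$ coming from tightness. You therefore need neither concentration of $m_N$ around its mean nor the subsequential identification of $\E\psi_1$. The paper's route identifies the limit through the law of a single weight $w_1$ and its exact independence from $m_N^{(1)}$; yours only needs the empirical law of the weights to converge.

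\textbf{Two minor points.} First, the region relevant for the denominators is $\{\Imag(zx)\ge0\}$, not $\{\Imag x\ge0\}$; on the latter $|1+wx|$ can vanish. Second, the detour through large $\Imag z$ and Vitali is unnecessary, since your $|z|/\Imag z$ bound already holds for every $z\in\C^+$.
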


\begin{remark} Theorem \ref{thm: uni} is in particular valid in the case where $w_i=g(\frac{|\pi Y_i|^2}{k})$ where $\pi$ is an orthogonal projector of rank $k\to \infty$ , $g$ is a bounded function satisfying item (4) in Assumption \ref{ass:gauss}   and provided the following Central Limit theorem holds true: set
 $\zeta_i:=Y_i^\top\pi Y_i$ (so $\E[\zeta_i]=\tr\pi=k$). Then it holds that
 \[
\frac{\zeta_i-k}{\sqrt k}\ \Longrightarrow\ \mathcal N(0,\sigma^2)\qquad\text{for some fixed }\sigma^2>0.
\]
The CLT is in particular valid if $\sigma^2>0$ and $\pi$ is the orthogonal projector on the first $k$ coordinates. In this case, $H=1/2\delta_a+1/2\delta_b$ as in the Gaussian case. 
\end{remark}

The rest of the paper is concerned with the proof of the above results. We first prove Theorem \ref{thm:gauss} for Gaussian vectors, later extending it to sub-Gaussian vectors in Theorem \ref{thm: uni}. The proof for the Gaussian case is interesting by itself and provides quantitative bounds. The universality statement will follow from the analysis of a resolvent (fixed point) equation and uses standard concentration arguments.

\section{Proof of Theorem \ref{thm:gauss}}
We handle the Gaussian case: in this setting (and in comparison with Section \ref{sec: uni}), we obtain quantitative estimates on the Levy distance of the e.s.d. of $M_N$ to that of a well chosen standard sample covariance matrix studied by \cite{Silverstein95}.\\
One can write, because the $Y_i$'s are Gaussian, $Y_i=(X_i,Z_i)\in\R^k\times\R^{p-k}$ and using polar decomposition,
$$X_i=R_iU_i,\qquad R_i:=|X_i|>0,\quad U_i\ \text{uniform on }\mathbb S^{k-1},\quad R_i\perp\!\!\!\perp U_i.$$
Thus, $w_i=g(R_i^2/k)$ is a function of  $R_i$ only.

\begin{lemma}\label{lem:weight}
One has that $R_i/\sqrt k\to1$ in probability, and $w_i \Longrightarrow 1/2\delta_a+1/2\delta_b$.  Moreover,
$$\mathrm{Var}(R_i)\to\frac12\qquad(k\to\infty).$$
\end{lemma}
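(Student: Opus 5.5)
Since $X_i\sim\mathcal N(0,I_k)$, we have $R_i^2=|X_i|^2\sim\chi^2_k$, i.e.\ $R_i^2=\sum_{j\le k}\xi_j^2$ with $\xi_j$ i.i.d.\ standard Gaussian. The plan is to derive all three claims from the classical law of large numbers and CLT for $R_i^2$, together with a direct moment computation for $\mathrm{Var}(R_i)$.

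\emph{Step 1 (LLN).} We have $\E[R_i^2/k]=1$ and $\mathrm{Var}(R_i^2/k)=2/k\to0$. Chebyshev's inequality gives $R_i^2/k\to1$ in probability. Since the square root is continuous at $1$, it follows that $R_i/\sqrt k\to1$ in probability.

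\emph{Step 2 (weights).} By the CLT, $T_k:=(R_i^2-k)/\sqrt{2k}\Rightarrow\mathcal N(0,1)$. Hence
\[
\Prob(R_i^2/k<1)=\Prob(T_k<0)\to\tfrac12,\qquad \Prob(R_i^2/k>1)\to\tfrac12 .
\]
Now fix $\varepsilon>0$ and choose $\delta>0$ such that $|g(x)-a|<\varepsilon$ on $(1-\delta,1)$ and $|g(x)-b|<\varepsilon$ on $(1,1+\delta)$; this is possible because the one-sided limits exist. By Step 1, $\Prob(|R_i^2/k-1|\ge\delta)\to0$. Also $\Prob(R_i^2=k)=0$, since the law of $\chi^2_k$ is absolutely continuous. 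So with probability tending to one, $w_i$ lies within $\varepsilon$ of $a$ exactly on the event $\{T_k<0\}$, and within $\varepsilon$ of $b$ on $\{T_k>0\}$. Take a bounded Lipschitz test function $h$. Then
\[
\big|\E h(w_i)-\tfrac12 h(a)-\tfrac12 h(b)\big|\le \mathrm{Lip}(h)\,\varepsilon+o(1).
\]
Letting $\varepsilon\to0$ gives $w_i\Rightarrow\frac12\delta_a+\frac12\delta_b$. The function $g$ is bounded because $S$ is compact, so no integrability issue arises.

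\emph{Step 3 (variance).} Write $\mathrm{Var}(R_i)=\E R_i^2-(\E R_i)^2=k-(\E R_i)^2$. We have the exact formula
\[
\E R_i=\sqrt2\,\frac{\Gamma((k+1)/2)}{\Gamma(k/2)},
\]
and the standard expansion
\[
\frac{\Gamma(x+1/2)}{\Gamma(x)}=\sqrt x\Big(1-\frac1{8x}+O(x^{-2})\Big),
\]
applied at $x=k/2$. This yields $(\E R_i)^2=k\big(1-\frac1{2k}+O(k^{-2})\big)=k-\frac12+O(1/k)$, so $\mathrm{Var}(R_i)\to\frac12$.

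As a sanity check, the delta method gives $R_i-\sqrt k=(R_i^2-k)/(R_i+\sqrt k)\approx \mathcal N(0,1/2)$. However, convergence in law alone does not give convergence of variances, which is why we use the exact Gamma computation. Alternatively, one could prove uniform integrability of $(R_i-\sqrt k)^2$ via the bound $|R_i-\sqrt k|\le|R_i^2-k|/\sqrt k$, whose second moment equals $2$; boundedness in $L^{2+\eta}$ would follow similarly from higher moments of $\chi^2_k$.

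The main point requiring care is Step 3: the limit of the variance needs either the precise Gamma-ratio asymptotics or a uniform integrability argument, rather than the CLT alone. Steps 1 and 2 are routine. The only delicate issue there is the discontinuity of $g$ at $1$, which is handled by noting that the limiting normal law gives no mass to $0$ and that the one-sided limits of $g$ exist.
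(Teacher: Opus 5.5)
Your proof is correct and follows the paper's route: concentration of $R_i^2/k$ (you use Chebyshev where the paper invokes Hanson--Wright), the CLT to get $\Prob(R_i^2<k)\to\frac12$, and the exact formula $\mathrm{Var}(R_i)=k-2\big(\Gamma(\frac{k+1}{2})/\Gamma(\frac k2)\big)^2$. Two steps the paper leaves implicit are spelled out in your version: the one-sided-limit argument that turns $\Prob(R_i^2<k)\to\frac12$ into $w_i\Rightarrow\frac12\delta_a+\frac12\delta_b$, and the expansion $\Gamma(x+\frac12)/\Gamma(x)=\sqrt x\,(1-\frac1{8x}+O(x^{-2}))$ that yields the variance limit.
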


\begin{proof}
One has that
$R_i^2\sim\chi^2_k$, with $\mathbb{E}(R_i^2)=k$ and $\text{Var}(R_i^2)=2k$. By Hanson-Wright inequality one has that :
$$\Prob(|R_i^2-k|>t)\le 2\exp\big(-c_0\min(\frac{t^2}{k},\,t)\big)$$ for some constant $c_0.$\\
One can check that $$\mathrm{Var}(R_i)=k -2\left(\frac{\Gamma((k+1)/2)}{\Gamma (k/2)} \right)^2.$$ From this, one deduces that $\mathrm{Var}(R_i)\to 1/2,$ 
which is a classical result for $\sqrt{\chi^2_k}$. The Central Limit Theorem applied to $R_i^2-k$ gives the symmetry of the limiting distribution implying that $\Prob(R_i^2<k)\to\frac12$.
\end{proof}

\begin{remark} This is a key estimate: the radius $R_i$ has $O(1)$ fluctuations around $\sqrt k$, thus not growing with $k$. This is an essential point in the next comparison.
\end{remark}

We now build a matrix with the same limiting empirical distribution.

Let $(R_i')_{i=1}^N$ be i.i.d. copies with the same distribution than $R_i$ (i.e.\ $\sqrt{\chi^2_k}$), independent of $(R_i,U_i,Z_i)_i$. 

We now define $$\tilde X_i:=R_i'U_i,\qquad \tilde Y_i:=(\tilde X_i ,Z_i),\qquad M_N'':=\frac1N\sum_i w_i\tilde Y_i\tilde Y_i^\top.$$

Then it is straightforward that $$w_i\perp\!\!\!\perp\tilde Y_i$$ and $$\tilde Y_i\sim\mathcal N(0,I_p).$$ 

Observe that $M_N''=\frac1N\sum_iw_i\tilde Y_i\tilde Y_i^\top$ falls exactly in the class of Silverstein's sample covariance random matrices (studied by \cite{Silverstein95}) with  i.i.d. weights of distribution tending to \ $H$, independent of the Gaussian vectors $\tilde Y_i$.

The comparison of limiting empirical spectral distributions will now follow from the following Lemma.
Let $F^S$ denote the empirical spectral distribution of a matrix $S.$
Let also $L(\cdot, \cdot)$ denote the Levy distance defined by 
$$L(F^S, F^T)=\inf\{\epsilon, F^S(x-\epsilon)-\epsilon\leq F^T(x)\leq F^S(x+\epsilon)+\epsilon, \forall x\}.$$
\begin{lemma}[\cite{BaiSilverstein} Corollary A.42  ]\label{lem:bai}
For matrices $A,B$ of size $p\times n$, and $S=AA^\top$, $S'=BB^\top$, one has that
$$L^4(F^S,F^{S'})\le\frac2{p^2}\tr(AA^\top+BB^\top)\cdot\tr[(A-B)(A-B)^\top].$$
\end{lemma}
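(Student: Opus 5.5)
The plan is to reduce to a statement about ordered singular values and then pass to eigenvalues, in three steps.

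\textbf{Step 1 (singular value perturbation).} Let $s_1\ge\dots\ge s_p$ and $t_1\ge\dots\ge t_p$ be the singular values of $A$ and $B$, padded with zeros if $n<p$. Then the eigenvalues of $S$ and $S'$, in decreasing order, are $\lambda_i=s_i^2$ and $\mu_i=t_i^2$. I would invoke the Hoffman--Wielandt inequality for singular values:
\[
\sum_i (s_i-t_i)^2\le \tr[(A-B)(A-B)^\top].
\]
This follows from the Hermitian Hoffman--Wielandt inequality applied to the symmetric dilations $\begin{pmatrix}0&A\\ A^\top&0\end{pmatrix}$ and $\begin{pmatrix}0&B\\ B^\top&0\end{pmatrix}$. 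The eigenvalues of a dilation are $\pm s_i$ together with zeros, so the dilated version of the inequality carries a factor $2$ on both sides, and that factor cancels.

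\textbf{Step 2 (passing to eigenvalues).} Factor each term as $|\lambda_i-\mu_i|=|s_i-t_i|(s_i+t_i)$ and apply Cauchy--Schwarz:
\[
\sum_i|\lambda_i-\mu_i|\le\Big(\sum_i(s_i-t_i)^2\Big)^{1/2}\Big(\sum_i(s_i+t_i)^2\Big)^{1/2}.
\]
Since $(s_i+t_i)^2\le 2(s_i^2+t_i^2)$, the second factor is at most $\big(2\tr(AA^\top+BB^\top)\big)^{1/2}$. Combining with Step 1 gives
\[
\Big(\sum_i|\lambda_i-\mu_i|\Big)^2\le 2\,\tr(AA^\top+BB^\top)\,\tr[(A-B)(A-B)^\top].
\]

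\textbf{Step 3 (Lévy distance via the pairing).} This is the only point needing care. Set $\varepsilon^2:=\frac1p\sum_i|\lambda_i-\mu_i|$; the case $\varepsilon=0$ is trivial. By Markov's inequality, the number of indices with $|\lambda_i-\mu_i|>\varepsilon$ is at most $\frac{1}{\varepsilon}\sum_i|\lambda_i-\mu_i|=p\varepsilon$.

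Fix $x$. Every $i$ with $\lambda_i\le x-\varepsilon$ either satisfies $\mu_i\le x$, or is one of these bad indices. Hence
\[
F^S(x-\varepsilon)\le F^{S'}(x)+\varepsilon.
\]
By symmetry, every $i$ with $\mu_i\le x$ either has $\lambda_i\le x+\varepsilon$ or is bad, so $F^{S'}(x)\le F^S(x+\varepsilon)+\varepsilon$.

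Thus $L(F^S,F^{S'})\le\varepsilon$, that is $L^4\le\varepsilon^4=\frac1{p^2}\big(\sum_i|\lambda_i-\mu_i|\big)^2$. Inserting the bound from Step 2 yields the stated inequality.

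The main obstacle is Step 1, namely justifying the Hoffman--Wielandt inequality for singular values through the dilation. Steps 2 and 3 are elementary counting and Cauchy--Schwarz arguments.
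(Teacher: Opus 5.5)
Your proof is correct. The paper gives no proof of its own and simply cites Bai--Silverstein, and your three steps reproduce the standard argument behind that corollary: the singular-value Hoffman--Wielandt inequality via the symmetric dilation, Cauchy--Schwarz on $s_i^2-t_i^2=(s_i-t_i)(s_i+t_i)$, and the elementary bound $L^2\le\frac1p\sum_i|\lambda_i-\mu_i|$ for paired atoms, which is the case $\alpha=1$ of their general L\'evy-distance lemma.
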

Using this lemma, one deduces the next Proposition
\begin{proposition}\label{prop:levy}One has that
$L(\mu_{M_N},\mu_{M_N''})\to0$ almost surely. 
\end{proposition}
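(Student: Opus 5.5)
We apply Lemma~\ref{lem:bai} with the $p\times N$ matrices
$$A=\frac{1}{\sqrt N}\big[\sqrt{w_1}\,Y_1,\dots,\sqrt{w_N}\,Y_N\big],\qquad B=\frac{1}{\sqrt N}\big[\sqrt{w_1}\,\tilde Y_1,\dots,\sqrt{w_N}\,\tilde Y_N\big],$$
so that $AA^\top=M_N$ and $BB^\top=M_N''$. The $Z_i$ parts coincide, and $X_i-\tilde X_i=(R_i-R_i')U_i$ with $|U_i|=1$. Hence
$$\tr[(A-B)(A-B)^\top]=\frac1N\sum_{i=1}^N w_i\,(R_i-R_i')^2\le \frac{\max S}{N}\sum_{i=1}^N (R_i-R_i')^2 .$$
Similarly,
$$\tr(AA^\top+BB^\top)=\frac1N\sum_i w_i\big(|Y_i|^2+|\tilde Y_i|^2\big)\le \frac{\max S}{N}\sum_i\big(|Y_i|^2+|\tilde Y_i|^2\big).$$

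The second bound is of order $p$ almost surely. The variables $|Y_i|^2,|\tilde Y_i|^2\sim\chi^2_p$, so $\frac1{Np}\sum_i(|Y_i|^2+|\tilde Y_i|^2)\to 2$ almost surely. This follows from the strong law for triangular arrays, or directly from a fourth-moment bound together with Borel--Cantelli: the centered sum has variance $O(Np)$, and its fourth moment is $O(N^2p^2)$, which after division by $(Np)^4$ is summable in $N$. It follows that $\tr(AA^\top+BB^\top)\le Cp$ almost surely for large $N$.

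For the first bound, Lemma~\ref{lem:weight} gives $\E(R_i-R_i')^2=2\,\mathrm{Var}(R_i)\to 1$. So $\frac1N\sum_i(R_i-R_i')^2$ has mean $O(1)$, uniformly in $k$. To upgrade this to an almost sure bound, note that $R_i-\E R_i$ is sub-Gaussian with $O(1)$ constant, since $x\mapsto|x|$ is $1$-Lipschitz and Gaussian concentration applies uniformly in $k$. Hence $(R_i-R_i')^2$ has uniformly bounded fourth moments. A fourth-moment Chebyshev bound with Borel--Cantelli, as above, then gives $\frac1N\sum_i(R_i-R_i')^2\le C$ almost surely for large $N$.

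Combining the two bounds,
$$L^4(\mu_{M_N},\mu_{M_N''})\le \frac{2}{p^2}\cdot Cp\cdot C'=O(1/p)\to 0\quad\text{a.s.},$$
which proves the proposition, with the quantitative rate $L=O(p^{-1/4})$. The main point to check is the uniformity in $k$ of the concentration of $R_i$. This is precisely what the Remark after Lemma~\ref{lem:weight} emphasizes: the fluctuations of $R_i$ are $O(1)$ and do not grow with $k$, so the discrepancy $\tr[(A-B)(A-B)^\top]$ stays $O(1)$ while the normalization contributes a factor $p^{-2}\cdot p$.
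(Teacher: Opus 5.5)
Your proof is correct and follows essentially the paper's argument. Both use the same matrices $A,B$ in Lemma~\ref{lem:bai}, an almost sure $O(p)$ bound on $\tr(AA^\top+BB^\top)$, and an almost sure $O(1)$ bound on $\frac1N\sum_i w_i(R_i-R_i')^2$ from an $O(N^{-2})$ fourth-moment estimate plus Borel--Cantelli; the paper gets this via Rosenthal's inequality and $\sup_{N,k}\E|R_i-R_i'|^8<\infty$, which it only asserts, and your Gaussian-concentration argument for the $1$-Lipschitz map $x\mapsto|x|$ actually justifies that uniform-in-$k$ bound.
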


\begin{proof}
Define the $p\times N$ matrices $A:=\frac1{\sqrt N}[\sqrt{w_1}Y_1|\cdots|\sqrt{w_N}Y_N]$, and similarly $B:=\frac1{\sqrt N}[\sqrt{w_1}\tilde Y_1|\cdots|\sqrt{w_N}\tilde Y_N]$ , so that $AA^\top=M_N$, $BB^\top=M_N''$.

One first has that $\tr(AA^\top)=\frac1N\sum_iw_i|Y_i|^2$. As $w_i$ is bounded and $|Y_i|^2=O_P(p)$,one has that $\tr(AA^\top)=O_P(p)$ ; and similarly $\tr(BB^\top)=O_P(p)$. This can be extended to 
$\tr(AA^\top)=O(p)$ a.s. by Hanson-Wright inequality. 
We now turn to the difference term $A-B=\frac1{\sqrt N}[\sqrt{w_i}(Y_i-\tilde Y_i)]_i$, and $Y_i-\tilde Y_i=((R_i-R_i')U_i,0)$, so that $|Y_i-\tilde Y_i|^2=(R_i-R_i')^2$. Thus one has that
$$\tr[(A-B)(A-B)^\top]=\frac1N\sum_iw_i(R_i-R_i')^2.$$
By Lemma~\ref{lem:weight}, $R_i,R_i'$ are i.i.d. of variance tending to $\frac12$, independent of each other. We deduce that $\E[(R_i-R_i')^2]=\mathrm{Var}(R_i)+\mathrm{Var}(R_i')\to1$ — which is bounded, and does not grow with $k$. One then has that
$$\E\,\tr[(A-B)(A-B)^\top]=O(1).$$

%Using now Lemma~\ref{lem:bai}, one has that
%$$\E\,L^4(\mu_{M_N},\mu_{M_N''})\longrightarrow0.$$
The almost sure convergence follows from the upper bound
\begin{eqnarray}&&\mathbb{E}\Big | \tr[(A-B)(A-B)^\top]-\E\tr[(A-B)(A-B)^\top]\Big |^4= \cr &&\E \Big |\frac1N\sum_iw_i(R_i-R_i')^2-\E \frac1N\sum_iw_i(R_i-R_i')^2\Big |^4.\end{eqnarray}
Set $V_i=w_i(R_i-R_i')^2-\E w_i(R_i-R_i')^2.$
Following \cite{LiBaiHu}, Lemma 9, we use Rosenthal's inequality, which states that 
$$\E \Big| \sum_{i=1}^N V_i\Big |^r\leq C_r \left( \sum_{i=1}^N \E |V_i|^r +(\sum_{i=1}^N \E V_i^2)^{r/2}\right )$$ for some constant $C_r.$
We now use the fact that $$
\sup_{N,k}\mathbb E|R_i-R_i'|^8<\infty.$$
From this, one can show that 
$$\mathbb{E}\Big | \tr[(A-B)(A-B)^\top]-\E\tr[(A-B)(A-B)^\top]\Big |^4=O(N^{-2}),$$ which is  summable in $N$. The conclusion follows from Borel-Cantelli's lemma. One has that $\tr[(A-B)(A-B)^\top-\mathbb{E} \tr[(A-B)(A-B)^\top$ converges a.s. towards $0,$
 which is then enough to ensure the result.
\end{proof}

There now remains to give the limiting ESD of $M_N''$, which is well known in random matrix theory since \cite{Silverstein95}. 
\begin{proposition}\label{prop:classical}
Almost surely, one has that $\mu_{M_N''}\Rightarrow\mu$, where $m(z)$ satisfies \eqref{eq:main}.
\end{proposition}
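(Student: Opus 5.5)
We apply Silverstein's theorem \cite{Silverstein95}, in the form given in \cite{BaiSilverstein}, to $M_N''$. The point of the construction is that $M_N''=\frac1N \tilde Y W_N \tilde Y^\top$, where $\tilde Y=[\tilde Y_1,\dots,\tilde Y_N]$ is a $p\times N$ matrix with i.i.d. $\mathcal N(0,1)$ entries and $W_N=\mathrm{diag}(w_1,\dots,w_N)$ is independent of $\tilde Y$.

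First we check these two structural facts. Since $R_i'$ is independent of $(R_i,U_i,Z_i)$ and has the law of $\sqrt{\chi^2_k}$, the vector $R_i'U_i$ has the law of $X_i$, which is $\mathcal N(0,I_k)$. This uses the independence of radius and direction for standard Gaussians. Moreover $(R_i'U_i,Z_i)$ is independent of $R_i$, hence of $w_i=g(R_i^2/k)$: indeed $U_i$, $Z_i$ and $R_i'$ are mutually independent and independent of $R_i$. Independence across $i$ gives that $\tilde Y$ is independent of $W_N$ and has i.i.d. standard entries.

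Next we identify the limiting weight distribution. By Lemma~\ref{lem:weight}, the law of each $w_i$ converges to $H=\frac12\delta_a+\frac12\delta_b$. The empirical measure $\frac1N\sum_i\delta_{w_i}$ is built from i.i.d. variables within each row of a triangular array, and its law depends on $N$ through $k$. For each continuity point $x$ of $H$, the bounds $\Prob(|\frac1N\sum_i 1_{w_i\le x}-\Prob(w_1\le x)|>\epsilon)\le 2e^{-2N\epsilon^2}$ (Hoeffding) are summable. Borel--Cantelli then gives almost sure convergence at a countable dense set of such points, hence $F^{W_N}\Rightarrow H$ almost surely. Also, $\|W_N\|_{\op}\le \max S<\infty$ uniformly, because $g$ takes values in the compact set $S$.

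Then we condition on $(w_i)_i$. On the almost sure event where $F^{W_N}\Rightarrow H$, the matrix $\tilde Y$ is still i.i.d. Gaussian and independent of $W_N$. Silverstein's theorem (\cite{BaiSilverstein}, Theorem 4.3, applied in the companion form $\frac1N \tilde Y W_N\tilde Y^\top$ with $p/N\to\gamma$) therefore yields, conditionally almost surely, $\mu_{M_N''}\Rightarrow\mu$. Here the Stieltjes transform of $\mu$ is the unique solution in $\C^+$ of $m=-\bigl(z-\int\frac{t\,dH(t)}{1+\gamma t m}\bigr)^{-1}$, which is \eqref{eq:main}.

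By Fubini the conclusion holds unconditionally almost surely. Combined with Proposition~\ref{prop:levy}, this also proves Theorem~\ref{thm:gauss}.

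The main obstacle is bookkeeping rather than analysis. We must check that the normalization of \eqref{eq:main}, with $\gamma$ multiplying $t\,m$ and $m$ the Stieltjes transform of the $p\times p$ matrix, matches Silverstein's statement. This means translating between the $N\times N$ companion transform $\underline m$ and $m$ via $\underline m=-\frac{1-\gamma}{z}+\gamma m$ and rescaling. We also need to handle the triangular-array dependence of the weights on $k=k(N)$; this is why the Hoeffding and Borel--Cantelli step above is used instead of the Glivenko--Cantelli theorem.
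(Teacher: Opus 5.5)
Your proposal is correct and is essentially the paper's argument: the paper simply invokes \cite{Silverstein95} after noting that $M_N''$ has i.i.d.\ bounded weights with law tending to $H$, independent of the Gaussian $\tilde Y_i$, and you supply the checks it leaves implicit (the independence structure, the almost sure convergence $F^{W_N}\Rightarrow H$ for the triangular array via Hoeffding and Borel--Cantelli, and the normalization of \eqref{eq:main}). One point to tidy: conditioning on the whole weight sequence and invoking Fubini tacitly needs $(W_N)_N$ to be independent of the whole sequence $(\tilde Y^{(N)})_N$, which can fail across $N$ (e.g.\ nested samples with varying $k=k(N)$); the step is unnecessary, since Theorem~4.3 of \cite{BaiSilverstein} already allows a random $T_n$ independent of $X_n$ for each $n$, and summable bounds for the Stieltjes transform of $\mu_{M_N''}$ around its conditional mean given $W_N$ give the almost sure statement for any coupling.
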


As $L(\mu_{M_N},\mu_{M_N''})\to 0$ a.s., $\mu_{M_N}$ and $\mu_{M_N''}$ have the same weak limit. This common limit admits $m(z)$, the solution of \eqref{eq:main}, as a Stieltjes transform.

\section{The case where $k$ is fixed}
 We first examine the limiting e.s.d. of $M_N$ and prove Proposition \ref{thm:kfixe}. 
\begin{proof} We write $Y_i=\begin{pmatrix}X_i \cr Z_i\end{pmatrix}$ where $X_i \in \R^k$ and $Z_i\in \R^{p-k}. $ We then define the matrices
\[
X=(X_1,\ldots,X_N),\qquad Z=(Z_1,\ldots,Z_N), \quad W_N=\text{diag}(w_1, \ldots, w_N).
\]
Then one has that 
\[
M_N=\frac1N
\begin{pmatrix}
XW_NX^\top&XW_NZ^\top\\
ZW_NX^\top&ZW_NZ^\top
\end{pmatrix}.
\]
Define
\[
S_N=\frac1NZW_NZ^\top,\qquad
\widehat M_N=
\begin{pmatrix}0_{k\times k}&0\\0&S_N\end{pmatrix}.
\]
The matrix $S_N$ then falls in the class studied by \cite{Silverstein95}. Its limiting spectral measure is then well understood.  
Now, because
$\rank(M_N-\widehat M_N)\leq2k,$
one has that 
$\sup_{x\in\R}|F^{M_N}(x)-F^{\widehat M_N}(x)|
\leq\frac{2k}{p}.$
Proposition \ref{thm:kfixe} then follows.
\end{proof}

 We now investigate the possible location of outliers and show Proposition \ref{prop: outlier}. This is the object of the rest of this Section.
\begin{proof}
Write
\[
M_N=\begin{pmatrix}A_N&B_N\\B_N^\top&S_N\end{pmatrix},
\]
where
\[
A_N=\frac1NXW_NX^\top,\qquad
B_N=\frac1NXW_NZ^\top.
\]

\begin{lemma}\label{prop:schur}
If $z\notin\operatorname{Spec}(S_N)$, then
\[
z\in\operatorname{Spec}(M_N)
\quad\Longleftrightarrow\quad
\det K_N(z)=0,
\]
where
\[
K_N(z)=A_N-z\Id_k-B_N(S_N-z\Id)^{-1}B_N^\top.
\]

\end{lemma}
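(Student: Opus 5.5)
The plan is to apply the Schur complement formula to the block decomposition $M_N-z\Id_p=\begin{pmatrix}A_N-z\Id_k&B_N\\B_N^\top&S_N-z\Id_{p-k}\end{pmatrix}$. Since $z\notin\operatorname{Spec}(S_N)$, the lower-right block $S_N-z\Id_{p-k}$ is invertible. Here and throughout, the resolvent in $K_N(z)$ is understood to be of size $(p-k)\times(p-k)$.

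The key step is the standard factorization
\[
\begin{pmatrix}A_N-z\Id_k&B_N\\B_N^\top&S_N-z\Id\end{pmatrix}
=\begin{pmatrix}\Id_k&B_N(S_N-z\Id)^{-1}\\0&\Id\end{pmatrix}
\begin{pmatrix}K_N(z)&0\\0&S_N-z\Id\end{pmatrix}
\begin{pmatrix}\Id_k&0\\(S_N-z\Id)^{-1}B_N^\top&\Id\end{pmatrix}.
\]
I will verify it by multiplying out the three factors. The outer factors are block unitriangular, so their determinants equal $1$. Taking determinants therefore gives
\[
\det(M_N-z\Id_p)=\det K_N(z)\cdot\det(S_N-z\Id_{p-k}).
\]

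The second factor on the right is nonzero by assumption. Consequently $z$ is an eigenvalue of $M_N$, i.e. $\det(M_N-z\Id_p)=0$, if and only if $\det K_N(z)=0$. This proves both directions of the equivalence at once.

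There is no real obstacle in this argument. The only care needed is to keep the block sizes straight ($k$ versus $p-k$) and to note that $S_N$ is symmetric, so that $B_N^\top$ correctly appears in the lower-left block. If one prefers an eigenvector argument instead of determinants, the same conclusion follows. Write an eigenvector as $(u,v)$. The second block row gives $v=-(S_N-z\Id)^{-1}B_N^\top u$, and substituting into the first block row gives $K_N(z)u=0$, with $u\neq0$ since otherwise $v=0$. Conversely, any $u\neq 0$ in the kernel of $K_N(z)$ produces an eigenvector $(u,v)$ with $v$ defined by the same formula.
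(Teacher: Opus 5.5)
Your proof is correct and is exactly the Schur-complement argument the paper cites without further detail: the factorization checks out on multiplication, and $\det(M_N-z\Id_p)=\det K_N(z)\,\det(S_N-z\Id_{p-k})$, with the second factor nonzero, gives the equivalence. One small correction: the lower-left block equals $B_N^\top$ because $M_N$ is symmetric (equivalently, $W_N$ is diagonal), not because $S_N$ is symmetric, and the factorization itself does not use any symmetry of $S_N$.
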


\begin{proof}
This is a straightforward consequence of the Schur-complement formula.
\end{proof}

Define now
\[
V_N=\frac1{\sqrt N}ZW_N^{1/2},\qquad
\widetilde S_N=V_N^\top V_N
=\frac1NW_N^{1/2}Z^\top ZW_N^{1/2},
\]
and
\[
T_N(z)=(\widetilde S_N-z\Id_N)^{-1}.
\]

\begin{proposition}\label{prop:companion}
For $z\neq0$ outside $\operatorname{Spec}(S_N)$,
\[
K_N(z)
=-z\left[
\Id_k+\frac1NXW_N^{1/2}T_N(z)W_N^{1/2}X^\top
\right].
\]
\end{proposition}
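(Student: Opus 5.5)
This identity is a purely algebraic computation; I will verify it by rewriting $B_N(S_N-z)^{-1}B_N^\top$ through $V_N$ and then using a push-through identity to move the resolvent from the $(p-k)\times(p-k)$ side to the $N\times N$ side. First express everything through $V_N=\frac1{\sqrt N}ZW_N^{1/2}$:
\[
S_N=V_NV_N^\top,\qquad B_N=\frac1{\sqrt N}XW_N^{1/2}V_N^\top,\qquad A_N=\frac1N XW_NX^\top=\frac1N XW_N^{1/2}W_N^{1/2}X^\top .
\]
Consequently
\[
B_N(S_N-z\Id)^{-1}B_N^\top=\frac1N XW_N^{1/2}\,V_N^\top(V_NV_N^\top-z\Id)^{-1}V_N\,W_N^{1/2}X^\top .
\]

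The key step is the push-through identity $V_N^\top(V_NV_N^\top-z)^{-1}=(V_N^\top V_N-z)^{-1}V_N^\top$. This holds for $z\notin\operatorname{Spec}(S_N)\cup\{0\}$; for such $z$ both resolvents exist, because $V_NV_N^\top$ and $V_N^\top V_N$ share their nonzero spectrum. The identity follows by multiplying $V_N^\top(V_NV_N^\top-z)=(V_N^\top V_N-z)V_N^\top$ on the left and right by the respective inverses. It gives
\[
V_N^\top(S_N-z)^{-1}V_N=T_N(z)\widetilde S_N=\Id_N+zT_N(z),
\]
where the last equality uses $T_N(\widetilde S_N-z)=\Id_N$.

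Substituting this into $K_N(z)$, the term $\frac1N XW_N^{1/2}\Id_N W_N^{1/2}X^\top=A_N$ cancels exactly against $A_N$. What remains is
\[
K_N(z)=-z\Id_k-\frac zN XW_N^{1/2}T_N(z)W_N^{1/2}X^\top,
\]
which is the stated formula.

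The only delicate point is well-definedness. We need $z\neq0$ so that $T_N(z)$ exists even when $N>p-k$, since $\widetilde S_N$ then has a kernel. The nonzero eigenvalues of $\widetilde S_N$ coincide with those of $S_N$, so $z\notin\operatorname{Spec}(S_N)$ together with $z\neq0$ ensures invertibility of $\widetilde S_N-z\Id_N$. Beyond this, the argument is a direct computation with no real obstacle.
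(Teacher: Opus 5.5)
Your proof is correct and follows the paper's approach: the paper invokes the identity $\Id-V^\top(VV^\top-z\Id)^{-1}V=-z(V^\top V-z\Id)^{-1}$, which is exactly what your push-through computation $V_N^\top(S_N-z)^{-1}V_N=\Id_N+zT_N(z)$ gives, after which the $A_N$ terms cancel as you show. Your well-definedness remark, that $V_NV_N^\top$ and $V_N^\top V_N$ share their nonzero spectrum and so $z\neq0$ guarantees $T_N(z)$ exists, fills in a detail the paper leaves implicit.
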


\begin{proof}
The identity follows from
\[
\Id-V^\top(VV^\top-z\Id)^{-1}V
=-z(V^\top V-z\Id)^{-1}.
\]
\end{proof}

We are now in position to derive a deterministic equation for candidate outliers.

For complex $z\notin\supp(\mu)$ define
\[
\rho(z)=\frac{1}{k}\E\left[
\frac{\chi_k^2g(\chi_k^2/k)}
{1+\gamma m(z)g(\chi_k^2/k)}
\right].
\]
We prove the following.
\begin{proposition}\label{prop:limitK}
Uniformly on compact subsets $K$ of
$\R\setminus(\supp(\mu_{\gamma, \nu_k})\cup\{0\})$, one has that  
\[
K_N(z)\as(\rho(z)-z)\Id_k.
\]
\end{proposition}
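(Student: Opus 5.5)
By Proposition~\ref{prop:companion} it suffices to show that, uniformly on compact $K\subset\R\setminus(\supp(\mu_{\gamma,\nu_k})\cup\{0\})$,
\[
Q_N(z):=\frac1N XW_N^{1/2}T_N(z)W_N^{1/2}X^\top\;\as\; -\Big(1+\frac{\rho(z)-z}{z}\Big)^{-1}\!\!\!\cdot 0+ q(z)\Id_k ,
\]
where $q(z)$ is the constant with $-z(1+q(z))=\rho(z)-z$. Concretely, $q(z)=\frac1k\E\big[\chi_k^2 g(\chi_k^2/k)\,\underline t(z)\big]$, with $\underline t(z)$ the deterministic equivalent of the diagonal entry $(T_N(z))_{ii}$ given that $w_i=g(\chi_k^2/k)$. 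The key structural point is that $X$ is \emph{not} independent of $W_N$, but $Z$ is independent of $(X,W_N)$. Therefore I will condition on $(X_i)_i$ (hence on $W_N$) and treat $T_N(z)$ as the resolvent of a weighted Gram matrix built from the independent Gaussian matrix $Z$.

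\textbf{Step 1 (no eigenvalues of $S_N$ in $K$).} First I will show that, almost surely for $N$ large, $\operatorname{Spec}(S_N)$ and $\operatorname{Spec}(\widetilde S_N)\setminus\{0\}$ avoid a neighbourhood of $K$. This uses the Bai--Silverstein ``no eigenvalues outside the support'' theorem for $S_N=\frac1NZW_NZ^\top$, conditionally on $W_N$, whose empirical weight distribution converges a.s.\ to $\nu_k$. This gives $\|T_N(z)\|\le C$ uniformly on $K$, so that all quantities below are analytic and bounded.

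\textbf{Step 2 (entrywise deterministic equivalent).} Next I will use the anisotropic/isotropic local law for the companion resolvent, or simply the classical leave-one-out formula. For the leave-one-out formula, the rank-one Sherman--Morrison identity gives
\[
(T_N)_{ii}=\frac{-1}{z\,(1+w_i\,\tfrac1N Z_i^\top (S_N^{(i)}-z)^{-1}Z_i)} ,
\]
where the quadratic form concentrates at $\gamma m(z)$ (the Stieltjes transform of $\mu_{\gamma,\nu_k}$, up to the $\Id$ vs.\ companion convention). Hence $(T_N)_{ii}\approx -z^{-1}(1+\gamma m(z)w_i)^{-1}$ uniformly in $i$, with high probability. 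In addition, I will show that the off-diagonal entries $(T_N)_{ij}$ are $O(N^{-1/2})$, with $\frac1N\sum_{i\ne j}|(T_N)_{ij}|^2$ bounded.

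\textbf{Step 3 (averaging over $X$).} Writing $Q_N=\frac1N\sum_{i,j}\sqrt{w_iw_j}\,(T_N)_{ij}X_iX_j^\top$, the diagonal part equals $\frac1N\sum_i w_i (T_N)_{ii}X_iX_i^\top$. By Step 2 this is $-\frac1{zN}\sum_i \frac{w_i X_iX_i^\top}{1+\gamma m w_i}+o(1)$. Since $w_i$ depends on $X_i$ only through $|X_i|$ and $X_i=R_iU_i$ with $U_i$ uniform, the strong law of large numbers gives a.s.\ convergence to $-\frac1z\E\big[\frac{R^2 g(R^2/k)}{1+\gamma m g}\big]\E[U U^\top]=-\frac{\rho(z)}{z}\Id_k$. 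For the off-diagonal part, conditionally on $X$, the matrix $T_N$ depends on $Z$ only. I will bound its second moment via the Gaussian Poincaré inequality or via $\E_Z|(T_N)_{ij}|^2=O(1/N)$, and then apply a fourth-moment Borel--Cantelli argument entrywise, since $k$ is fixed. Substituting into Proposition~\ref{prop:companion} then yields $K_N(z)\to -z\Id_k+\rho(z)\Id_k$, as claimed. Uniformity on $K$ will follow from equicontinuity: Lipschitz bounds in $z$ coming from $\|T_N\|\le C$, together with a finite net.

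\textbf{Main obstacle.} I expect the hard part to be controlling the off-diagonal sum $\frac1N\sum_{i\neq j}$. There are $N^2$ terms, and we need cancellation beyond entrywise bounds. I would try computing $\E_Z\|\cdot\|^2$ through the identity $T_N=-z^{-1}(\Id-W^{1/2}Z^\top(S_N-z)^{-1}ZW^{1/2}/N)$, which expresses the off-diagonal contribution as a quadratic form in the $Z_i$'s with independent Gaussian structure. I would also use the fact that the $X_i$ enter only as fixed-dimensional vectors, so the expectation over $U_i$ kills the cross terms in mean.
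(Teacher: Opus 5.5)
\textbf{Gap: the off-diagonal term.} Your skeleton is the paper's: the companion identity, exact separation for $S_N$ via Bai--Silverstein, the deterministic equivalent $-\frac1z(\Id_N+\gamma m(z)W_N)^{-1}$ for $T_N$ conditionally on $X$, then the strong law and rotational invariance. (Your target is just $Q_N(z)\to-\frac{\rho(z)}{z}\Id_k$; the first term of your display is $0$.) The paper gets the deterministic equivalent for the full bilinear forms $\frac1N u_r^\top T_N u_s$, $u_r=W_N^{1/2}X^\top e_r$, in one stroke from the Knowles--Yin anisotropic local law, so it never splits $T_N$ into diagonal and off-diagonal parts. Once you make that split, the off-diagonal term
\[
O_N(z):=\frac1N\sum_{i\neq j}\sqrt{w_iw_j}\,(T_N)_{ij}\,X_iX_j^\top
\]
carries the whole difficulty, and your proposal does not control it; it lists options without carrying one out.
\begin{itemize}
\item The entrywise option cannot work. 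Conditionally on $X$ the coefficients $\sqrt{w_iw_j}X_{ir}X_{js}$ are fixed. Then $N^2$ entries of size $N^{-1/2}$ (even in $L^2(\Prob_Z)$) with prefactor $1/N$ give nothing better than $O(\sqrt N)$. You need distinct off-diagonal entries not to add coherently, which is exactly what the isotropic law encodes.
\item Gaussian Poincar\'e would at best give concentration around $\frac1N\sum_{i\ne j}\sqrt{w_iw_j}\,\E_Z[(T_N)_{ij}]X_iX_j^\top$. You would still have to show this vanishes, and handle the event where $T_N$ is unbounded at real $z$.
\item ``The expectation over $U_i$ kills the cross terms in mean'' gives only a vanishing conditional mean. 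It is also incompatible with your Step 3 as written, since conditioning on all of $X$ freezes the $U_i$.
\end{itemize}

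\textbf{How to close it.} The missing idea is that the same cancellation holds for the second and fourth moments, after which the trivial bound $\|T_N\|_{\HS}^2\le N\|T_N\|_{\op}^2$ suffices. Condition on $\mathcal G:=\sigma(Z,R_1,\dots,R_N)$ rather than on $X$. Then $T_N$ and $W_N$ are $\mathcal G$-measurable, and the $U_i$ are i.i.d.\ uniform on $\mathbb S^{k-1}$, centered and independent of $\mathcal G$. With $c_{ij}:=\sqrt{w_iw_j}R_iR_j(T_N)_{ij}$, only the pairings $(i',j')\in\{(i,j),(j,i)\}$ survive in the second moment, so
\[
\E\big[|(O_N)_{rs}|^2\,\big|\,\mathcal G\big]\le\frac{2}{k^2N^2}\sum_{i\ne j}|c_{ij}|^2\le\frac{2}{k^2N^2}\max_i(w_iR_i^2)^2\,\|T_N\|_{\HS}^2\le C\frac{(\log N)^2}{N}.
\]
This holds on the $\mathcal G$-measurable event $\{\|T_N\|_{\op}\le C,\ \max_iR_i^2\le C\log N\}$, which holds eventually almost surely. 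The same pairing argument gives $\E[|(O_N)_{rs}|^4\mid\mathcal G]\le C(\log N)^4/N^2$, which is summable. Borel--Cantelli at fixed $z$, together with your net/Lipschitz argument, then gives uniformity on $K$, and your entrywise $O(N^{-1/2})$ claim is no longer needed. Equivalently, you can keep conditioning on $X$ and use that $Z_i\mapsto -Z_i$ preserves the conditional law and maps $T_N\mapsto DT_ND$ for diagonal sign matrices $D$.

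\textbf{Comparison with the paper.} With this step supplied, your route is a legitimate, more elementary alternative. It uses only the separation event and the sign or rotational symmetry. The paper's black box gives quantitative bounds for arbitrary conditionally deterministic vectors without any symmetry. Its cost is a heavy local law and an extension from $\Imag z\ge N^{-1/2+\epsilon}$ to real $z$ that the paper only sketches.

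\textbf{Smaller point on the diagonal step.} At real $z$ in a spectral gap, interlacing allows $S_N^{(i)}$ to have an eigenvalue there even when $S_N$ has none. So either prove separation for all $S_N^{(i)}$ simultaneously with summable failure probabilities, or work at $\Imag z>0$ and continue $\frac1N\sum_i w_i\big((T_N)_{ii}+\frac{1}{z(1+\gamma m(z)w_i)}\big)X_iX_i^\top$ analytically to $K$.
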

\begin{proof}
We prove Proposition \ref{prop:limitK}.
The proof will be based on an
anisotropic
deterministic equivalent for $T_N$ conditional to $X$ and $W_N$.\\ 

We set 
$$
m_N(z)=-\Big(z-\int \frac{t\,d\nu_N(t)}{1+\gamma_N\,t\,m_N(z)}\Big)^{-1},
\qquad \nu_N=\frac1N\sum_{i=1}^N\delta_{w_i},\quad \gamma_N=\frac pN .
$$
Denote by $\rho_N$ the probability measure with Stieltjes transform $m_N.$

We first show the following Lemma. 

\begin{lemma}\label{lem: supprho}
Let $K$ be a compact subset of 
$\C\setminus(\supp(\mu_{\gamma, \nu_k})\cup\{0\})$. One has that
$K\cap \text{supp}(\rho_N)=\emptyset$ for $N$ large enough and $\sup_{z \in K}|m_N(z)-m(z)|\to 0$ as $N\to \infty.$
\end{lemma}
\begin{proof}
We show Lemma \ref{lem: supprho}. Let $\gamma'>0,$ $\nu'$ be a probability distribution with compact support $S'\subset (0, \infty).$ Define the Marchenko-Pastur transformation:
$$z_{\gamma', \nu'}(m)=-\frac{1}{m}+\int \frac{t d\nu'(t)}{1+\gamma' tm}=:-\frac{1}{m}+\int \phi_{\gamma'}(t,m)d\nu'(t).$$
Let $\mu_{\gamma', \nu'}$ be the associated Marchenko-Pastur distribution. Then a real point $x\not=0$ lies outside the support $\text{supp}(\mu_{\gamma', \nu'})$ iff there exists $m\in \R\setminus\{0\}$ such that 
$$1+\gamma' tm\not=0 \:\forall t\in S'; z_{\gamma', \nu'}(m)=x; \: z_{\gamma', \nu'}'(m)>0.$$
In that case, one has that $m=m_{\gamma', \nu'}(x).$\\

Let $S=\text{supp}(\nu_k).$
The $w_i$ are i.i.d. with law $\nu_k$, $\nu_N\Longrightarrow \nu_k$, and $\gamma_N\to \gamma$ and one has that $\text{supp}(\nu_N)\subset S.$ We work on this event. We also write 
$z=z_{\gamma, \nu_k};\: z_N=z_{\gamma_N, \nu_N}.$ Note that it is enough to consider $K_{\R}=K\cap \R$ as $\text{supp}(\rho_N)$ is real.\\
Let now $x\in K_{\R}:$ there exists $m(x)\not=0$ such that $1+\gamma t m(x)\not=0$ $\forall t \in S$ and $z'(m(x))>0.$ The map $m$ is continuous so $\Gamma=m(K_{\R})$ is a compact subset of $\R\setminus \{0\}.$ By compactness of $\Gamma \times S$, there exists $\delta_0, c, \epsilon >0$ such that on $\Gamma_{\delta_0}=\{m, \text{dist}(m, \Gamma)\leq \delta_0\}$ and for $|\gamma'-\gamma|\leq \epsilon$ one has that 
$$|m|\geq \delta_0;\: |1+\gamma' tm|\geq \delta_0, \: t\in S \text{ and }z_{\gamma, \nu_k}'(m)\geq c.$$
On $S\times \Gamma_{\delta_0}\times[\gamma-\epsilon, \gamma+\epsilon],$ the functions 
$t\mapsto \phi_{\gamma'}(t,m)$ and $t\mapsto \frac{t^2}{(1+\gamma' tm)^2}$ are uniformly continuous and bounded. Now, one has that 
$$z_N(m)-z(m)=\int (\phi_{\gamma_N}-\phi_{\gamma})d\nu_N(t)+\left ( \int \phi_{\gamma}d\nu_N-\int \phi_{\gamma}d\nu_k \right ).$$
The first term is upper bounded by $\sup_{S\times \Gamma_{\delta_0}}|\phi_{\gamma_N}-\phi_{\gamma}|\to 0$ as $N\to \infty.$
For the second term, the family $\{\phi_{\gamma}(\cdot, m)\}$ is a compact family in the set of continuous functions on $S$. So the weak convergence $\nu_N\Longrightarrow \nu_k$ holds uniformly for $\phi_{\gamma}$ in $m$. Applying the same to $z'_N, z'$ we deduce that
$$\sup_{\Gamma_{\delta_0}}\left ( |z-z_N|+|z'-z'_N|\right )\to 0 \text{ as }N \to \infty.$$
Fix now $\delta \in (0, \delta_0], x\in K_{\R}.$ Since $z'\geq c$ on $\Gamma_{\delta_0}$, we deduce that $z(m(x)-\delta)\leq x-c\delta$ and $z(m(x)+\delta)\geq x+c\delta.$ Let now $N_0(\delta)$ be such that for $N\geq N_0$
$$\sup_{\Gamma_{\delta_0}}|z-z_N|< c\delta;\: \sup_{\Gamma_{\delta_0}}|z'-z_N'|<c/2;\:|\gamma-\gamma_N|<\epsilon. $$
Then $z_N$ is continuous on the interval and $z_N(m(x)+\delta)>x>z_N(m(x)-\delta).$
Thus there exists $s_N \in [m(x)-\delta, m(x)+\delta]$ such that $$z_N(s_N)=x.$$
In addition, one has that 
$z_N'(s_N)\geq c/2>0$, $s_N\not=0$ and $1+\gamma_N ts_N\not=0, t\in S$.
From the latter estimates, one deduces that $x\notin \text{supp}(\rho_N)$ and $m_N(x)=s_N.$ Furthermore $|m_N(x)-m(x)|\leq \delta.$ Note that $N_0$ does not depend on $x$ so this gives the first statement of the lemma and shows that $\sup_{K_{\R}}|m_N-m|\to 0$ as $N\to \infty.$\\
The extension to complex points essentially follows from the same argument. We choose $r>0$ so that $K''=\{z \in \C, \text{dist}(z, K)\leq 2r\}$ avoids $\text{supp}(\mu_{\gamma, \nu_k})\cup \{0\}$ and set $K'=\{z \in \C, \text{dist}(z, K)\leq r\}.$ We use the same arguments as above for $K''$. The $m_N$ are analytic on a neighborhood of $K'$ and for $N$ large enough, there are bounded there by $2/r.$ Using the a.s. convergence of $\rho_N \Longrightarrow \mu_{\gamma, \nu_k}$, we deduce that $m_N(z)$ locally converges uniformly to $m(z).$ This gives the result.

\end{proof}
 
We now use the anisotropic local law proved by \cite{KnowlesYin}, Theorem 3.16. From now on, we  consider the conditional distribution with respect to $X$ (and $W_N$). This establishes that for all unit vectors $u,v$ and uniformly over $z\in U(\epsilon):=\{z \in \mathbb{R}, |z|\leq \epsilon^{-1}, |z|\geq \epsilon; d(z, supp(\rho_N))\geq \epsilon\}$, for $N$ large enough
$$u^* \left[
T_N(z)+\frac1z(\Id_N+\gamma m_N(z)W_N)^{-1}
\right]v\prec N^{-1/4}.$$ 
Hereabove the term $X\prec Y$ uniformly over $U$ means that, for any fixed $\epsilon, D>0$
$$\sup_{u \in U}\mathbb{P} \left (|X(u)|>N^{\epsilon}Y(u)\right )<N^{-D}.$$
Note that the above bound is not optimal but is enough for our purposes.
Here, to extend the validity of the above estimate from $z=E+i\eta $ with $\eta\geq N^{-1/2+\epsilon}$ to $z=E$, we have used that with probability $1$, $\tilde S_N$ has no eigenvalue in $K_r$ if $K_r=\{x, \text{dist}(x, K)\leq r\}$ and $K_{2r}$ avoids $\text{supp}(\mu_{\gamma, \nu_k})\cup \{0\}.$ This follows from Theorem 1.1. in \cite{BaiSilverstein2}. We skip the detail.

\begin{lemma}\label{ass:iso}
Let $K$ be a compact subset
$K\Subset\R\setminus(\supp(\mu_{\gamma, \nu_k})\cup \{0\})$. There exists $\epsilon=\epsilon (K)$ such that a.s. $K \subset U(\epsilon)$ for $N$ large enough and
\[
\sup_{z\in K}\left\|
\frac1NXW_N^{1/2}
\left[
T_N(z)+\frac1z(\Id_N+\gamma m_N(z)W_N)^{-1}
\right]
W_N^{1/2}X^\top
\right\|\as0.
\]
\end{lemma}
\begin{proof}
We apply the local law to the vectors $u_{r,N}=W_N^{1/2}X^\top e_r, r=1, \ldots,k.$ It is an easy consequence of the strong law of large number that $|u_{r,N}|=O(\sqrt N)$ almost surely. We can apply the anisotropic local law to the $k$ vectors simultaneously, yielding Lemma \ref{ass:iso}.
\end{proof}

By Proposition \ref{prop:companion} and Lemma \ref{ass:iso}, we deduce that
\[
\frac1NXW_N^{1/2}T_N(z)W_N^{1/2}X^\top
=-\frac1{zN}\sum_{i=1}^N
\frac{w_iX_iX_i^\top}{1+\gamma m(z)w_i}+o(1).
\]
This follows from the fact that $\sup_{z \in K }|m_N(z)-m(z)|\to 0$ a.s. Here we have used Lemma \ref{lem: supprho}.

Since $k$ is fixed, the strong law of large numbers applies entrywise. Rotational
invariance of the Gaussian law ($w(X)$ is a function of $|X|$ only) implies
\[
\E\left[\frac{w(X)XX^\top}{1+\gamma m(z)w(X)}\right]
=\rho(z)\Id_k.
\]
Substituting the latter into the preceding formula proves the result.
As a consequence we have shown Proposition \ref{prop: outlier}.
\end{proof}

We finish with the sufficient condition for outliers. 
For $z\in\mathbb R\setminus(\operatorname{supp}\mu_{\gamma,\nu_k}\cup\{0\})$ we set
\[
h(z):=\rho(z)-z.
\]
For a closed interval $I\subset\mathbb R$ and $r>0$, let
$I_r:=\{x\in\mathbb R:\ \operatorname{dist}(x,I)\le r\}$.

\begin{proposition}\label{prop:suffoutliers}
Let $I$ be a compact interval with
$I\subset\mathbb R\setminus(\operatorname{supp}(\mu_{\gamma,\nu_k})\cup\{0\})$.
Then $h$ is continuous and strictly decreasing on $I$, with
\begin{equation}\label{eq:h-decrease}
h(z')\le h(z)-(z'-z)\qquad (z<z'\in I),
\end{equation}
so that $h$ has at most one zero in $I$. Moreover, almost surely, for $N$ large enough:
\begin{enumerate}
\item[(i)] if $h$ has no zero in $I$, then $M_N$ has no eigenvalue in $I$;
\item[(ii)] if $h(z_0)=0$ for some $z_0\in\operatorname{int}(I)$, then $M_N$ has exactly $k$
eigenvalues in $I$, counted with multiplicity, and all of them converge to $z_0$.
\end{enumerate}
\end{proposition}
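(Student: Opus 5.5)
The plan has two parts. First, the deterministic monotonicity of $h$, which follows from differentiating $\rho$ under the expectation. Second, a transfer of this monotonicity to the random matrix $K_N(z)$ of Lemma~\ref{prop:schur}, using the uniform convergence of Proposition~\ref{prop:limitK}.

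\emph{Monotonicity of $h$.} On $I$ the Stieltjes transform $m$ is real and analytic. From the characterization recalled in the proof of Lemma~\ref{lem: supprho}, $1+\gamma t m(z)\neq 0$ for $t\in S=\supp(\nu_k)$. By compactness of $I\times S$, the quantity $|1+\gamma t m(z)|$ is bounded below uniformly on $I\times S$. Since $g$ takes values in a compact subset of $\R_+^*$ and $\E\chi^2_k<\infty$, dominated convergence lets me differentiate under the expectation:
\[
\rho'(z)=-\frac{\gamma m'(z)}{k}\,\E\left[\frac{\chi_k^2\,g(\chi_k^2/k)^2}{(1+\gamma m(z)g(\chi_k^2/k))^2}\right].
\]
Outside the support, $m'(z)=\int (x-z)^{-2}d\mu(x)>0$, and every other factor inside the expectation is nonnegative. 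Hence $\rho'\le 0$, so $h'\le -1$. Integrating gives \eqref{eq:h-decrease}, and in particular $h$ has at most one zero.

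\emph{Setting up the random side.} By \cite{BaiSilverstein2}, almost surely for $N$ large, $S_N$ has no eigenvalue in $I_r$ for some small $r>0$. Here I use that $0\notin I$ and that the nonzero spectra of $S_N$ and $\widetilde S_N$ coincide. Lemma~\ref{prop:schur} therefore applies on all of $I$. Moreover, writing $\ker$ via the Schur complement, the multiplicity of $z$ as an eigenvalue of $M_N$ equals $\dim\ker K_N(z)$. Proposition~\ref{prop:limitK} gives $\sup_{z\in I}\|K_N(z)-h(z)\Id_k\|\to0$ almost surely.

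\emph{Proof of (i).} If $h$ has no zero, continuity gives $\min_I|h|=c>0$. For $N$ large, $K_N(z)$ is then invertible on all of $I$, so $M_N$ has no eigenvalue in $I$.

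\emph{Proof of (ii).} The key extra observation is that $K_N(z)$ is real symmetric with
\[
K_N'(z)=-\Id_k-B_N(S_N-z)^{-2}B_N^\top\le-\Id_k .
\]
Hence each ordered eigenvalue $\kappa_j(z)$, $j=1,\dots,k$, of $K_N(z)$ is continuous on $I$ and strictly decreasing, with slope at most $-1$.

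Now fix $\delta>0$ small. On $I\setminus(z_0-\delta,z_0+\delta)$, inequality \eqref{eq:h-decrease} gives $|h|\ge\delta$, so as in (i) there are no eigenvalues of $M_N$ there for $N$ large. Next, $K_N(z_0-\delta)\approx\delta\Id_k>0$ and $K_N(z_0+\delta)\approx-\delta\Id_k<0$. Each $\kappa_j$ is strictly decreasing and changes sign on this interval, so it vanishes exactly once in $(z_0-\delta,z_0+\delta)$. Summing over $j$, the total $\sum_z\dim\ker K_N(z)$ over the interval equals $k$. By the multiplicity identity above, $M_N$ has exactly $k$ eigenvalues in $I$, all within $\delta$ of $z_0$. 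Since $\delta$ is arbitrary, they all converge to $z_0$.

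\emph{Main obstacle.} I expect the delicate step to be the exact multiplicity count. It needs both the identification of eigenvalue multiplicity of $M_N$ with $\dim\ker K_N(z)$ and the eigenvalue-curve monotonicity of $K_N$. I would handle it directly from the Schur factorization
\[
M_N-z=\begin{pmatrix}\Id & B_N(S_N-z)^{-1}\\ 0&\Id\end{pmatrix}\begin{pmatrix}K_N(z)&0\\0&S_N-z\end{pmatrix}\begin{pmatrix}\Id&0\\(S_N-z)^{-1}B_N^\top&\Id\end{pmatrix}.
\]
The outer factors are invertible, so this factorization gives equality of kernel dimensions.
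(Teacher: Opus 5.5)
Your proof is correct, and for (i), (ii) and the setup it follows the paper's argument essentially verbatim. That shared part is: $S_N$ has no spectrum near $I$ a.s.; multiplicity equals $\dim\ker K_N(z)$; $K_N'(z)\preceq-\Id_k$ makes each ordered eigenvalue curve strictly decreasing; then the sign-change count at $z_0\pm\delta$. You also write out the Schur factorization behind the multiplicity identity, which the paper only asserts. Two small points of presentation. The ordered eigenvalues need not be differentiable at crossings, so "slope at most $-1$" should be read as $\kappa_j(z')\le\kappa_j(z)-(z'-z)$, obtained by integrating $K_N'\preceq-\Id_k$ and applying Courant--Fischer. And the absence of eigenvalues of $S_N$ near $I$ also uses Lemma~\ref{lem: supprho}, to keep $I_r$ away from $\supp\rho_N$ at finite $N$.

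The genuine difference is how you get \eqref{eq:h-decrease}. The paper derives it from the random side. It integrates the matrix inequality to $K_N(z')\preceq K_N(z)-(z'-z)\Id_k$, applies Courant--Fischer, and passes to the limit using $\sup_{I_r}\|K_N-h\Id_k\|\to0$ from Proposition~\ref{prop:limitK}. You instead differentiate $\rho$ under the expectation and use $m'(z)=\int(x-z)^{-2}\,d\mu(x)>0$ off the support.

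Your route buys the following:
\begin{itemize}
\item a purely deterministic property of $h$ no longer depends on the almost-sure random-matrix limit, and hence not on the local-law input behind Proposition~\ref{prop:limitK};
\item you get differentiability of $h$ and the explicit formula $h'=-1-\frac{\gamma m'}{k}\E\bigl[\chi_k^2g^2/(1+\gamma m g)^2\bigr]$, which in fact gives $h'<-1$.
\end{itemize}
The price is that you must justify the uniform lower bound on $|1+\gamma t\,m(z)|$ over $I\times\supp\nu_k$, via the characterization recalled in the proof of Lemma~\ref{lem: supprho}, and the domination needed to differentiate under the expectation. The paper gets the inequality for free from the matrix monotonicity it needs anyway for the count in (ii).
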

In particular, a limiting outlier location outside
$\operatorname{supp}(\mu_{\gamma,\nu_k})\cup\{0\}$ exists if and only if the equation $h(z)=0$ has a real solution there.

\begin{proof}
Choose $r>0$ such that
$I_{2r}\subset\mathbb R\setminus(\operatorname{supp}\mu_{\gamma,\nu_k}\cup\{0\})$.

We first show that $S_N$ has no eigenvalue near $I.$ This follows from 
Lemma \ref{lem: supprho} applied to 
$I_{2r}$ and Theorem 1.1. in \cite{BaiSilverstein2}.
On this event $K_N(z)$
is a real symmetric $k\times k$ matrix, of class $C^1$ in $z\in I_r$.\\
One may also note that the multiplicity of $z$ as an eigenvalue of $M_N$ equals $\dim(\ker K_N(z))$.\\
We now show the monotonicity of $h$.
For $z\in I_r$, one has that
\[
K_N'(z)=-I_k-B_N(S_N-z)^{-2}B_N^\top\ \preceq\ -I_k .
\]
Therefore $K_N(z')\preceq K_N(z)-(z'-z)I_k$ for $z<z'$ in $I_r$.
Let $\lambda_1(z)\ge\dots\ge\lambda_k(z)$ denote the ordered eigenvalues of $K_N(z)$.
By the Courant--Fischer theorem, $\lambda_j(z')\le\lambda_j(z)-(z'-z)$, and by
Weyl's inequality each $\lambda_j$ is continuous. In particular, each $\lambda_j$ is
strictly decreasing.
Now one has that
$\sup_{z\in I_r}\|K_N(z)-h(z)I_k\|\to0$ almost surely.
Passing to the limit gives \eqref{eq:h-decrease} on $I_r$.
Hence $h$ is continuous, strictly decreasing, has at most one zero, and
$|h(z)|\ge|z-z_0|$ whenever  $h(z_0)=0 .$

We can now study case  $(i)$. If $h$ has no zero in $I$, then $c:=\min_I|h|>0$ by compactness.
For $N$ large, $\sup_I\|K_N-hI_k\|<c/2$, so $K_N(z)$ is invertible for every $z\in I$.
Thus $M_N$ has no eigenvalue in $I$.\\
We turn to case $(ii)$. 
Let $\delta>0$ be small enough that $[z_0-\delta,z_0+\delta]\subset\operatorname{int}(I)$.
For $z\in I\setminus(z_0-\delta,z_0+\delta)$, one has that $|h(z)|\ge\delta$. We can similarly conclude as above that 
$z$ is not an eigenvalue of $M_N$.

Inside the interval $[z_0-\delta,z_0+\delta]$, one has that $h(z_0-\delta)\ge\delta$ and $h(z_0+\delta)\le-\delta$.
By Weyl's inequality, one has that
\[
\max_{j\le k}\big|\lambda_j(z_0\pm\delta)-h(z_0\pm\delta)\big|
\le\big\|K_N(z_0\pm\delta)-h(z_0\pm\delta)I_k\big\|<\delta/2
\]
for $N$ large. Hence, for every $j\le k$,
$\lambda_j(z_0-\delta)>0>\lambda_j(z_0+\delta)$.
Each $\lambda_j$ is continuous and strictly decreasing, so it vanishes at exactly one point
of $(z_0-\delta,z_0+\delta)$. The number of eigenvalues of $M_N$ in the interval,
counted with multiplicity, is
\[
\sum_{z}\dim(\ker K_N(z))=\#\{(z,j):\ \lambda_j(z)=0\}=k .
\]
Taking $\delta=1/q$, $q\in\mathbb N$ large, and intersecting countably many almost sure
events, the $k$ eigenvalues converge to $z_0$.
\end{proof}
This finishes the proof of Proposition \ref{prop: outlier}. \end{proof}

\section{Universality \label{sec: uni}}

We here prove Theorem \ref{thm: uni}.
Throughout this section, $z=E+i\eta\in\C^+$ is fixed, and we set $R(z):=(M_N-z)^{-1}$, $m_N(z):=\frac1p\tr R(z)$.

The main formula will be the Sherman--Morrison formula. For $i\le N$ set $M_N^{(i)}:=M_N-\frac1Nw_iY_iY_i^\top$, $R^{(i)}(z):=(M_N^{(i)}-z)^{-1}$, and $\mathcal F^{(i)}:=\sigma(Y_j,\,j\ne i)$. Set $Q_i(z):=Y_i^\top R^{(i)}(z)Y_i$, one has that
\begin{eqnarray}
&&Y_i^\top R(z)Y_i=\frac{Q_i(z)}{1+\frac{w_i}NQ_i(z)}, \text{ and}\cr 
&&
z\,\tr R(z)=\tr(M_NR(z))-p=\frac1N\sum_{i=1}^N\frac{w_iQ_i(z)}{1+\frac{w_i}NQ_i(z)}-p.
\end{eqnarray}
Dividing by $p$, we obtain that :
\begin{equation}\label{eq:star}
z\,m_N(z)=\frac1{Np}\sum_{i=1}^N\frac{w_iQ_i(z)}{1+\frac{w_i}NQ_i(z)}-1.
\end{equation}
We are going to analyze this formula using concentration results. We start with some preliminary lemmas. Most of these lemmas are standard analysis of the resolvent formula. 

\subsection{Some preliminary lemmas}

We work on $\mathcal E_N:=\{\|M_N\|_\op\le C_0\}\cap\{\max_i\|Y_i\|^2\le3p/2\}\cap\{\min_i\|Y_i\|^2\ge p/2\}$, of probability tending to 1; all bounds below hold on $\mathcal E_N$, which suffices to conclude to the convergence in probability.

We now use the standard leave-one-out comparison formula. 
 By construction $M_N^{(i)}$ is $\mathcal F^{(i)}$-measurable, hence independent of $Y_i$; moreover $M_N^{(i)}\succeq0$ and  $\|R^{(i)}(z)\|_\op\le1/\eta$, $\|R^{(i)}(z)\|_\HS\le\sqrt p/\eta$ on $\mathcal E_N$, for all $i$.

\begin{lemma}\label{lem:1}
On $\mathcal E_N$, uniformly in $i\le N$, one has that
\begin{equation}\label{eq:L1}
\big|\tr R(z)-\tr R^{(i)}(z)\big|\le \frac{\|\phi\|_\infty}{N\eta^2}\cdot\frac{3p}2=O_\eta\Big(\frac pN\Big)=O_\eta(1).
\end{equation}
\end{lemma}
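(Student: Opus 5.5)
The plan is to avoid the Sherman--Morrison denominator $1+\frac{w_i}{N}Q_i(z)$ altogether and work directly with the second resolvent identity. That identity yields exactly the product of two operator norms, $\eta^{-2}$, together with $w_i|Y_i|^2/N$, which is the form of the stated bound.

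\textbf{Step 1 (resolvent identity).} Since $M_N-M_N^{(i)}=\frac1N w_iY_iY_i^\top$, I will write
\[
R(z)-R^{(i)}(z)=-R(z)\big(M_N-M_N^{(i)}\big)R^{(i)}(z)=-\frac{w_i}{N}\,R(z)Y_iY_i^\top R^{(i)}(z).
\]
Taking traces and using cyclicity gives
\[
\tr R(z)-\tr R^{(i)}(z)=-\frac{w_i}{N}\,Y_i^\top R^{(i)}(z)R(z)Y_i .
\]

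\textbf{Step 2 (norm bounds).} Both $M_N$ and $M_N^{(i)}$ are real symmetric. Hence for $z=E+i\eta$ one has $\|R(z)\|_\op\le 1/\eta$ and $\|R^{(i)}(z)\|_\op\le1/\eta$; the latter was already recorded before the lemma. Therefore
\[
\big|Y_i^\top R^{(i)}(z)R(z)Y_i\big|\le \|R^{(i)}(z)\|_\op\,\|R(z)\|_\op\,|Y_i|^2\le \frac{|Y_i|^2}{\eta^2}.
\]

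\textbf{Step 3 (use of $\mathcal E_N$ and the weights).} By Assumption~\ref{ass:uni}, $0<w_i=\phi(Y_i)\le\|\phi\|_\infty$. On $\mathcal E_N$ we have $\max_i|Y_i|^2\le 3p/2$. Combining these with Steps 1--2 gives, uniformly in $i\le N$,
\[
\big|\tr R(z)-\tr R^{(i)}(z)\big|\le\frac{\|\phi\|_\infty}{N\eta^2}\cdot\frac{3p}{2}.
\]
Since $p/N\to\gamma$, the right-hand side is $O_\eta(p/N)=O_\eta(1)$.

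There is no genuine obstacle here. The only point requiring care is to bound the trace of the rank-one difference via the two resolvent norms rather than through the Sherman--Morrison representation. That choice is why no lower bound on $|1+\frac{w_i}{N}Q_i(z)|$ is needed.
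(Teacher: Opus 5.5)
Your proof is correct and is essentially the paper's own argument. Both use the second resolvent identity $R(z)-R^{(i)}(z)=-\frac{w_i}{N}R(z)Y_iY_i^\top R^{(i)}(z)$, then cyclicity of the trace, then $\|R(z)\|_\op\|R^{(i)}(z)\|_\op\le\eta^{-2}$, and finally $w_i\le\|\phi\|_\infty$ with $|Y_i|^2\le 3p/2$ on $\mathcal E_N$.
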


\begin{proof}
By the resolvent identity, $R(z)-R^{(i)}(z)=R(z)\big(M_N^{(i)}-M_N\big)R^{(i)}(z)=-\frac{w_i}N R(z)Y_iY_i^\top R^{(i)}(z)$. Taking traces one deduces that
\[
\tr R(z)-\tr R^{(i)}(z)=-\frac{w_i}{N}\,Y_i^\top R^{(i)}(z)R(z)Y_i.
\]
Thus one gets that 
\[
\big|Y_i^\top R^{(i)}(z)R(z)Y_i\big|\le\|R^{(i)}(z)\|_\op\|R(z)\|_\op\|Y_i\|^2\le\frac1{\eta^2}\|Y_i\|^2\le\frac{3p}{2\eta^2},
\] on $\mathcal E_N.$
\end{proof}

We now use Hanson-Wright's inequality (see Theorem 6.2.2. in \cite{Vershynin}) to investigate the Sherman-Morrison formula.
Recall that the $Y_i$'s are i.i.d. sub Gaussian random vectors with $||Y_{11}||_{\psi_2}\leq K.$
\begin{lemma}\label{lem:2}
There is $c_{HW}=c_{HW}(K)>0$ such that, for every $i\le N$ and $t>0$,
\begin{eqnarray}
&&\Prob\Big(\big|Q_i(z)-\tr R^{(i)}(z)\big|>t\ \Big|\ \mathcal F^{(i)}\Big)\cr&&\le4\exp\left(-c_{HW}\min\left(\frac{t^2}{\|R^{(i)}(z)\|_\HS^2},\ \frac{t}{\|R^{(i)}(z)\|_\op}\right)\right),
\end{eqnarray}
where $Q_i(z):=Y_i^\top R^{(i)}(z)Y_i$.
\end{lemma}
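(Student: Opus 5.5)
The plan is to apply the Hanson--Wright inequality conditionally on $\mathcal F^{(i)}$, with $R^{(i)}(z)$ playing the role of the fixed matrix. The argument rests on two structural facts. First, $R^{(i)}(z)$ is $\mathcal F^{(i)}$-measurable, because $M_N^{(i)}$ is built only from the $Y_j$, $j\ne i$. Second, $Y_i$ is independent of $\mathcal F^{(i)}$ and has independent centered coordinates of unit variance with $\|Y_{ij}\|_{\psi_2}\le K$. Hence, conditionally on $\mathcal F^{(i)}$, the quantity $Q_i(z)=Y_i^\top R^{(i)}(z)Y_i$ is a quadratic form in a vector with independent sub-Gaussian entries and a deterministic matrix. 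Its conditional mean is $\E[Y_i^\top R^{(i)}Y_i\mid\mathcal F^{(i)}]=\sum_{j,l}R^{(i)}_{jl}\E[Y_{ij}Y_{il}]=\tr R^{(i)}(z)$, since the coordinates are centered, independent and of variance one.

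The one nontrivial point is that $R^{(i)}(z)$ is complex, while Theorem 6.2.2 in \cite{Vershynin} is stated for real matrices. To handle this I will write $R^{(i)}=A+iB$, where $A=\Real R^{(i)}$ and $B=\Imag R^{(i)}$ are the real matrices formed by taking real and imaginary parts entrywise. Since $Y_i$ is real, $Q_i-\tr R^{(i)}=(Y_i^\top AY_i-\tr A)+i(Y_i^\top BY_i-\tr B)$. If $|Q_i-\tr R^{(i)}|>t$, then one of the two real deviations exceeds $t/\sqrt2$, so a union bound reduces the problem to two real Hanson--Wright estimates. To control the norms I use $A=(R^{(i)}+\overline{R^{(i)}})/2$, where the bar denotes entrywise conjugation. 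Entrywise conjugation preserves both the operator norm and the Hilbert--Schmidt norm, so $\|A\|_\op\le\|R^{(i)}\|_\op$ and $\|A\|_\HS\le\|R^{(i)}\|_\HS$, and the same bounds hold for $B$. Each real estimate has the form $2\exp(-c\min(t^2/(2K^4\|R^{(i)}\|_\HS^2),\,t/(\sqrt2K^2\|R^{(i)}\|_\op)))$.

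Summing the two bounds gives the prefactor $4$. I then absorb $K^4$, $K^2$ and the factors $\sqrt2$ into a single constant $c_{HW}=c_{HW}(K)$. This constant does not depend on $i$, $N$ or $z$, because Hanson--Wright is uniform over fixed matrices. The proof also needs no restriction to $\mathcal E_N$; that event only enters later, when the norm bounds $\|R^{(i)}\|_\op\le1/\eta$ and $\|R^{(i)}\|_\HS\le\sqrt p/\eta$ are plugged into this inequality.

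Formally, the conditional statement follows because the regular conditional law of $Y_i$ given $\mathcal F^{(i)}$ is simply its own law, by independence. Applying Hanson--Wright pointwise for each realization of $R^{(i)}$ therefore gives the conditional bound. The only real obstacle is the passage from a complex matrix to real matrices, and it is resolved by the real/imaginary splitting described above.
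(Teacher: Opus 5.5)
Your proof is correct and takes the same route as the paper: split $R^{(i)}(z)=A+iB$ into entrywise real and imaginary parts, apply the real Hanson--Wright inequality conditionally on $\mathcal F^{(i)}$ to each part, and combine them by a union bound, which gives the prefactor $4$. Your extra details fill in exactly what the paper's one-line proof leaves implicit: the $t/\sqrt2$ splitting, the norm bounds $\|A\|_\op,\|B\|_\op\le\|R^{(i)}\|_\op$ and $\|A\|_\HS,\|B\|_\HS\le\|R^{(i)}\|_\HS$, and the freezing argument for the conditional law.
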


\begin{proof}This follows from the real Hanson--Wright inequality (see \cite{RudelsonVershynin2013}) applied to  $R^{(i)}(z)=A+iB$ with $A=\Real R^{(i)}(z)$, $B=\Imag R^{(i)}(z)$ (which are real symmetric and $\mathcal F^{(i)}$-measurable). \end{proof}

\begin{corollary}\label{cor:2}
Let $\kappa:=2/c_{HW}$ and $t_N:=\sqrt{\kappa\,p\log N}/\eta$. Then, on $\mathcal E_N$, with probability $\to1$,
\begin{equation}\label{eq:L2}
\max_{i\le N}\big|Q_i(z)-\tr R^{(i)}(z)\big|\le t_N=O\big(\sqrt{p\log N}\big)=o(p).
\end{equation}
\end{corollary}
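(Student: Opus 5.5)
The plan is to derive Corollary~\ref{cor:2} from Lemma~\ref{lem:2} by bounding the two norms on $\mathcal E_N$ and then taking a union bound over $i\le N$.

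First we bound the norms. On $\mathcal E_N$ we have $\|R^{(i)}(z)\|_\HS^2\le p/\eta^2$ and $\|R^{(i)}(z)\|_\op\le 1/\eta$ for every $i$. These bounds are deterministic, since $M_N^{(i)}\succeq 0$ gives $\|R^{(i)}\|_\op\le 1/\eta$ for all realizations. So the conditional tail bound of Lemma~\ref{lem:2} gives, uniformly in $i$,
\[
\Prob\Big(\big|Q_i(z)-\tr R^{(i)}(z)\big|>t_N\ \Big|\ \mathcal F^{(i)}\Big)\le 4\exp\Big(-c_{HW}\min\Big(\frac{t_N^2\eta^2}{p},\ t_N\eta\Big)\Big).
\]

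Next we insert $t_N=\sqrt{\kappa p\log N}/\eta$. The first term of the minimum is $t_N^2\eta^2/p=\kappa\log N=(2/c_{HW})\log N$. The second term is $t_N\eta=\sqrt{\kappa p\log N}$, which is larger than $\kappa\log N$ once $p\ge \kappa\log N$, and this holds for $N$ large because $p\asymp N$. So the minimum equals $\kappa\log N$, and the conditional probability is at most $4\exp(-2\log N)=4N^{-2}$.

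Taking expectations removes the conditioning. The bound is uniform, and it uses only the deterministic norm bounds on $R^{(i)}$, so it holds without reference to $\mathcal E_N$.

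A union bound over $i\le N$ then gives
\[
\Prob\big(\max_i|Q_i-\tr R^{(i)}|>t_N\big)\le 4N^{-1}\to0.
\]
Intersecting with $\mathcal E_N$, whose probability tends to $1$, yields the claim with probability tending to $1$. Finally, $t_N=O(\sqrt{p\log N})$ because $\eta$ is fixed, and $\sqrt{p\log N}=o(p)$ because $p\to\infty$ with $\log N\sim\log p$.

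There is no real obstacle here. The only point needing care is that Lemma~\ref{lem:2} is a conditional statement with random matrix norms. This causes no trouble, because the operator-norm bound is deterministic and $\|R^{(i)}\|_\HS^2\le p\|R^{(i)}\|_\op^2$ always holds. Hence the conditional bound can be averaged directly, and one does not need to restrict to $\mathcal E_N$ before the union bound.
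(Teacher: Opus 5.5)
Your proposal is correct and follows the paper's proof. You bound $\|R^{(i)}(z)\|_\HS\le\sqrt p/\eta$ and $\|R^{(i)}(z)\|_\op\le 1/\eta$, insert $t_N$ into Lemma~\ref{lem:2} to get the conditional bound $4N^{-2}$, and conclude with a union bound over $i\le N$; you also make explicit two points the paper leaves implicit: the minimum is attained by the first term once $p\ge\kappa\log N$, and the norm bounds are deterministic, so restricting to $\mathcal E_N$ is not needed before the union bound.
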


\begin{proof}
On $\mathcal E_N$, $\|R^{(i)}(z)\|_\HS\le\sqrt p/\eta$ and $\|R^{(i)}(z)\|_\op\le1/\eta$, so
\[
\frac{t_N^2}{\|R^{(i)}(z)\|_\HS^2}\ge\kappa\log N,\qquad \frac{t_N}{\|R^{(i)}(z)\|_\op}\ge\sqrt{\kappa p\log N}.
\]
 Hence, by Lemma~\ref{lem:2} and the choice $c_{HW}\kappa=2$,
\[
\Prob\big(|Q_i(z)-\tr R^{(i)}(z)|>t_N\mid\mathcal F^{(i)}\big)\le4N^{-2}.
\]
This yields the announced claim.
\end{proof}

Combining \eqref{eq:L1} and \eqref{eq:L2}, with probability $\to1$, one has that
\begin{equation}\label{eq:L3}
\max_{i\le N}\big|Q_i(z)-p\,m_N(z)\big|=o(p),\qquad \max_{i\le N}\big|m_N^{(i)}(z)-m_N(z)\big|=O_\eta(1/N),
\end{equation}
where $m_N^{(i)}(z):=\tr R^{(i)}(z)/p$.

We now examine in more detail the Sherman-Morrison formula, showing that the denominators do not vanish.

\begin{lemma}[Uniform lower bound on denominators]\label{lem:4}
For $N$ large enough, there is $c_1>0$ such that
\[
\min_{i\le N}\big|1+w_i\gamma_N m_N^{(i)}(z)\big|\ \ge\ c_1,\qquad \min_{i\le N}\Big|1+\frac{w_i}NQ_i(z)\Big|\ \ge\ c_1,\qquad \gamma_N:=p/N.
\]
\end{lemma}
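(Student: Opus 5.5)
The plan is to reduce both bounds to a single lower bound on $|1+w\gamma_N m|$, holding for $m$ in the set of admissible values of $m_N^{(i)}(z)$ and uniformly in the weight $w\in[a_{min},a_{max}]$. The second bound will then follow from the first by a perturbation argument based on \eqref{eq:L3}.

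\emph{First bound.} The matrix $M_N^{(i)}\succeq 0$, and $m_N^{(i)}(z)=\frac1p\tr R^{(i)}(z)$ is the Stieltjes transform of a probability measure supported in $[0,C_0]$, since $\|M_N^{(i)}\|_\op\le\|M_N\|_\op\le C_0$ on $\mathcal E_N$. I will argue directly with imaginary parts. For $w>0$,
\[
\Imag\big(1+w\gamma_N m_N^{(i)}(z)\big)=w\gamma_N\,\Imag m_N^{(i)}(z)=w\gamma_N\frac1p\sum_j\frac{\eta}{|\lambda_j^{(i)}-z|^2}.
\]
With $|\lambda_j^{(i)}-z|^2\le (C_0+|E|)^2+\eta^2$, this gives
\[
\big|1+w\gamma_Nm_N^{(i)}\big|\ge a_{min}\gamma_N\,\eta/\big((C_0+|E|)^2+\eta^2\big).
\]
Since $\gamma_N\to\gamma>0$, the right-hand side is at least some $2c_1>0$ for $N$ large. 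This uses $w_i\ge a_{min}>0$ crucially. The bound is deterministic on $\mathcal E_N$ and uniform in $i$.

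\emph{Second bound.} Write
\[
1+\frac{w_i}{N}Q_i=1+w_i\gamma_N m_N^{(i)}+\frac{w_i}{N}\big(Q_i-\tr R^{(i)}\big).
\]
By Corollary~\ref{cor:2}, on an event of probability tending to one, the error term is at most $a_{max}t_N/N=O(\sqrt{p\log N}/N)=o(1)$ uniformly in $i$. For $N$ large it is therefore below $c_1$, and the triangle inequality gives $|1+\frac{w_i}{N}Q_i|\ge c_1$. Alternatively, one can bound the imaginary part directly: $\Imag Q_i=\eta\,Y_i^\top |R^{(i)}|^2Y_i\ge \eta\|Y_i\|^2/((C_0+|E|)^2+\eta^2)\ge \eta p/(2((C_0+|E|)^2+\eta^2))$ on $\mathcal E_N$. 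This gives the second bound deterministically with no concentration needed, and it is the cleaner route I would present.

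\emph{Main obstacle.} The only delicate point is uniformity: the constant $c_1$ must depend only on $\eta,E,C_0,a_{min},\gamma$, and the statement is only "with probability tending to one" (on $\mathcal E_N$). Both are handled by the imaginary-part lower bounds above. Positivity of the weights ensures there is no cancellation between the real and imaginary parts.
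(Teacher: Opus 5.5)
Your argument is correct, but it takes a different route from the paper.

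You lower-bound $\Imag m_N^{(i)}(z)$ itself. This uses the spectral bound $\|M_N^{(i)}\|_\op\le C_0$ on $\mathcal E_N$, the strict positivity $w_i\ge a_{min}$, and $\gamma_N\to\gamma>0$. For the second bound it also uses $\|Y_i\|^2\ge p/2$. The result is a constant of order $a_{min}\gamma\eta/((C_0+|E|)^2+\eta^2)$, valid on $\mathcal E_N$ (or, via Corollary~\ref{cor:2}, with probability tending to one).

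The paper instead multiplies by $z$ and uses only $M_N^{(i)}\succeq 0$. For any probability measure $\nu$ on $[0,\infty)$, $\Imag(z\,m(z))=\int\lambda\eta|\lambda-z|^{-2}\,d\nu(\lambda)\ge 0$. Hence $\Imag\big(z(1+w\gamma_N m)\big)\ge\eta$, so $|1+w\gamma_N m|\ge\eta/|z|$ for every $w\ge 0$. The same bound applies to $1+\frac{w_i}{N}Q_i(z)$, because $\frac{w_i}{N}Q_i(z)$ is $w_i\|Y_i\|^2/N$ times the Stieltjes transform of the spectral measure of $M_N^{(i)}$ at $Y_i/\|Y_i\|$.

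What each approach buys:
\begin{itemize}
\item The paper's trick gives $c_1=\eta/|z|$ deterministically and for every $N$. It needs neither $\mathcal E_N$, nor lower bounds on the weights or on $\|Y_i\|$, and it survives vanishing weights.
\item Your route trades positivity of the spectrum for its boundedness, and needs weights bounded away from zero. Both are available under Assumption~\ref{ass:uni}, so nothing is lost for Lemma~\ref{lem:5}. The price is a worse constant that depends on $E$, $C_0$ and $a_{min}$.
\end{itemize}
Your perturbative alternative for the second bound via Corollary~\ref{cor:2} is also valid, but it is unnecessary once either imaginary-part argument is in place.
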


\begin{proof}
Let $m(z)$ be the Stieltjes transform of a probability distribution $\nu$. Then $\frac{z}{\lambda-z}=-1+\frac{\lambda}{\lambda-z}.$
As a consequence $\Imag (z m(z))=\int \lambda \eta \frac{1}{|\lambda-z|^2}d\nu(\lambda)\geq 0.$
Thus, for $w\geq 0$, one has that 
$\Imag(z(1 + w\gamma_N m(z))) = \eta + w\gamma_N \Imag(zm(z)) \geq \eta,$ which yields that $|1+w\gamma_Nm(z)|\geq \eta/|z|.$
We can thus apply this result to 
$m_N^{(i)}(z)$ which is the Stieltjes transform of the  spectral measure of $M_N^{(i)}$. 

For the second bound, one has that $Q_i(z)=Y_i^\top R^{(i)}(z)Y_i$ is of the same form as above. The same argument applies by using the spectral decomposition of $R^{(i)}.$
\end{proof}

\subsection{Approximation of the Sherman--Morrison fraction}

We can now proceed to the analysis of the Sherman-Morrison formula using Lemmas \ref{lem:4} and (\ref{eq:L3}).
\begin{lemma}\label{lem:5}
With probability $\to1$,
\begin{equation}\label{eq:L5}
\max_{i\le N}\left|\frac{w_iQ_i(z)}{1+\frac{w_i}NQ_i(z)}-\frac{w_i\,p\,m_N^{(i)}(z)}{1+w_i\gamma_Nm_N^{(i)}(z)}\right|=o(p).
\end{equation}
\end{lemma}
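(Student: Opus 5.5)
The plan is to write the difference in \eqref{eq:L5} as a single fraction and then bound numerator and denominator separately, using only Corollary~\ref{cor:2} and Lemma~\ref{lem:4}.

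\textbf{Algebraic reduction.} For scalars $x,y$ and $a\ge0$ with $1+ax\ne0$ and $1+ay\ne0$, one has
\[
\frac{x}{1+ax}-\frac{y}{1+ay}=\frac{x-y}{(1+ax)(1+ay)}.
\]
I apply this with $x=Q_i(z)$, $y=\tr R^{(i)}(z)=p\,m_N^{(i)}(z)$ and $a=w_i/N$. Since $ay=w_i\gamma_N m_N^{(i)}(z)$ with $\gamma_N=p/N$, the second fraction in \eqref{eq:L5} is exactly $w_i\,y/(1+ay)$. Hence
\[
\frac{w_iQ_i(z)}{1+\frac{w_i}NQ_i(z)}-\frac{w_i\,p\,m_N^{(i)}(z)}{1+w_i\gamma_Nm_N^{(i)}(z)}
=\frac{w_i\big(Q_i(z)-\tr R^{(i)}(z)\big)}{\big(1+\frac{w_i}NQ_i(z)\big)\big(1+w_i\gamma_Nm_N^{(i)}(z)\big)}.
\]

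\textbf{Bounds.} Lemma~\ref{lem:4} gives a lower bound $c_1$ on both denominator factors, uniformly in $i$; its proof shows one may take $c_1=\eta/|z|$, which depends only on the fixed $z$. The weights satisfy $w_i\le a_{max}$ by Assumption~\ref{ass:uni}. By Corollary~\ref{cor:2}, with probability tending to $1$,
\[
\max_i\big|Q_i(z)-\tr R^{(i)}(z)\big|\le t_N=O\big(\sqrt{p\log N}\big).
\]
Consequently, on the intersection of $\mathcal E_N$ with this event, which still has probability tending to $1$, the maximum in \eqref{eq:L5} is at most
\[
\frac{a_{max}\,t_N}{c_1^2}=O_\eta\big(\sqrt{p\log N}\big)=o(p),
\]
and \eqref{eq:L5} follows.

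\textbf{Main point to check.} The only delicate step is uniformity in $i$. This holds because Corollary~\ref{cor:2} already controls the maximum over $i$: the union bound over $N$ indices of the conditional tail $4N^{-2}$ gives total failure probability $4N^{-1}\to0$. The lower bounds in Lemma~\ref{lem:4} are deterministic once $z$ is fixed, so they are uniform in $i$ as well. No finer estimate such as \eqref{eq:L1} is needed here, since the comparison is made with $m_N^{(i)}$ rather than with $m_N$.
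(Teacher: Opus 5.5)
Your proof is correct and uses the same two inputs as the paper's: the concentration of $Q_i(z)$ around $\tr R^{(i)}(z)$ from Corollary~\ref{cor:2}, and the uniform lower bound $c_1=\eta/|z|$ on both denominators from Lemma~\ref{lem:4}. Your exact identity $\frac{x}{1+ax}-\frac{y}{1+ay}=\frac{x-y}{(1+ax)(1+ay)}$ is a tidier version of the paper's triangle-inequality split of $a_i/b_i-a_i'/b_i'$: it avoids the auxiliary bounds on $|a_i'|$ and $|b_i-b_i'|$ and the detour through \eqref{eq:L1}, and it gives the explicit rate $O(\sqrt{p\log N})$.
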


\begin{proof}
Write $a_i=w_iQ_i(z)$, $a_i'=w_i\,p\,m_N^{(i)}(z)$, $b_i=1+\frac{w_i}NQ_i(z)$, $b_i'=1+w_i\gamma_Nm_N^{(i)}(z)$. By \eqref{eq:L3}, one has that $|a_i-a_i'|=w_i|Q_i(z)-pm_N^{(i)}(z)|\le\|\phi \|_\infty\,o(p).$  By Lemma~\ref{lem:4}, $|b_i|,|b_i'|\ge c_1$. Also $|b_i-b_i'|=\frac{w_i}N|Q_i(z)-pm_N^{(i)}(z)|\le\frac{\|\phi\|_\infty}N\,o(p)=o(1)$, and $|a_i'|=w_ip|m_N^{(i)}(z)|\le\|\phi\|_\infty p/\eta$. We then deduce that
\[
\left|\frac{a_i}{b_i}-\frac{a_i'}{b_i'}\right|\le\frac{o(p)}{c_1}+\frac{(\|\phi\|_\infty p/\eta)\,o(1)}{c_1^2}=o(p),
\]
uniformly in $i\le N$.
\end{proof}

Set now $\psi_i:=\dfrac{w_i\,m_N^{(i)}(z)}{1+w_i\gamma_N\,m_N^{(i)}(z)}$. Combining \eqref{eq:star} and \eqref{eq:L5}:
\begin{equation}\label{eq:starstar}
z\,m_N(z)=\frac1N\sum_{i=1}^N\psi_i+o_P(1)-1.
\end{equation}
This formula (\ref{eq:starstar}) is the basis formula we are now going to investigate thanks to the following lemma. 

Note that the $\psi_i$ are identically distributed but they are not independent. We control their average by a bounded-differences inequality. 

\begin{lemma}\label{lem:6}
With probability $\to1$,
\begin{equation}\label{eq:L6}
\frac1N\sum_{i=1}^N\psi_i=\E\Big[\frac1N\sum_{i=1}^N\psi_i\Big]+O_P(N^{-1/2}).
\end{equation}
\end{lemma}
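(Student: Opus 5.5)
We plan to apply the bounded-differences (McDiarmid) inequality to $F:=\frac1N\sum_{i=1}^N\psi_i$, viewed as a function of the independent inputs $(Y_1,\dots,Y_N)$. Each weight $w_i=\phi(Y_i)$ is a function of $Y_i$ alone, so the pairs $(Y_j,w_j)$ are independent. Since $F$ is complex, we will treat $\Real F$ and $\Imag F$ separately. The goal is to show that replacing a single $Y_j$ by an independent copy $Y_j'$ changes $F$ by at most $C_\eta/N$ deterministically. McDiarmid then gives
\[
\Prob\big(|F-\E F|>t\big)\le 4\exp\big(-c\,N t^2/C_\eta^2\big),
\]
and hence $F-\E F=O_P(N^{-1/2})$.

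The key step is the deterministic bounded-difference estimate. Fix $j$ and replace $Y_j$ by $Y_j'$, so that $M_N$ becomes $\widetilde M_N$. There are two effects.

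\begin{enumerate}
\item For $i=j$, $m_N^{(j)}$ does not depend on $Y_j$, and only $w_j$ changes. We have $|\psi_j|\le C$ uniformly, by Lemma~\ref{lem:4}, by $|m_N^{(j)}|\le 1/\eta$ and by $w_j\le a_{max}$. So this term changes $F$ by at most $2C/N$.
\item For $i\neq j$, the matrix $M_N^{(i)}$ undergoes a perturbation of rank at most $2$: it loses $\frac{w_j}N Y_jY_j^\top$ and gains $\frac{w_j'}N Y_j'Y_j'^\top$. The standard interlacing bound for Stieltjes transforms, $|\tr(A-z)^{-1}-\tr(B-z)^{-1}|\le \rank(A-B)\,\pi/\eta$, then gives $|m_N^{(i)}-\widetilde m_N^{(i)}|\le C/(p\eta)$. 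Unlike Lemma~\ref{lem:1}, this bound needs no control on $\|Y_j\|$, which is exactly what McDiarmid requires.
\end{enumerate}

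The map $m\mapsto \frac{wm}{1+w\gamma_N m}$ has derivative $\frac{w}{(1+w\gamma_N m)^2}$. This derivative is bounded by $a_{max}/c_1^2$ as long as the denominator stays away from $0$ along the segment joining $m_N^{(i)}$ and $\widetilde m_N^{(i)}$.

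That last point is the main obstacle. Lemma~\ref{lem:4} gives the lower bound only at Stieltjes transforms of actual spectral measures of positive semidefinite matrices, not on the whole segment between two of them. We would handle this by writing the difference as
\[
\psi_i-\widetilde\psi_i=\frac{w_i\,(m-\widetilde m)}{(1+w_i\gamma_N m)(1+w_i\gamma_N\widetilde m)},
\]
which needs the lower bounds only at the two endpoints. Both endpoints are Stieltjes transforms of spectral measures of positive semidefinite matrices, so Lemma~\ref{lem:4} applies to each with the constant $\eta/|z|$. Since that constant is deterministic, the bound holds everywhere, not just on $\mathcal E_N$.

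Summing over $i\ne j$ then gives a change of at most
\[
\frac1N\sum_{i\ne j}\frac{a_{max}}{c_1^2}\cdot\frac{C}{p\eta}=O_\eta(1/p)=O_\eta(1/N).
\]
Adding the $i=j$ contribution, the total bounded difference is $C_\eta/N$. McDiarmid then yields \eqref{eq:L6} with exponential tails, in particular with probability tending to $1$.
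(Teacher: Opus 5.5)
Your proposal is correct and follows the paper's route. Both use McDiarmid's bounded-differences inequality: the rank-two perturbation bound makes each $m_N^{(i)}$, $i\neq j$, move by $O(1/N)$, and $\psi_j$ moves by $O(1)$ through $w_j$. Your additions make explicit what the paper leaves implicit: splitting into real and imaginary parts, using the identity for $\psi_i-\widetilde\psi_i$ so that the lower bound $\eta/|z|$ is needed only at the two endpoints, and noting that all constants are deterministic, so the bounded differences hold everywhere and not just on $\mathcal E_N$.
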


\begin{proof}
The map $(w,x)\mapsto h(w,x):=\frac{wx}{1+w\gamma_Nx}$ is, on the set $\{|1+w\gamma_Nx|\ge c_1\}$, Lipschitz in $x$ with constant $\le\|\phi\|_\infty/c_1^2$.
Replacing $Y_j$ by an independent copy $Y_j'$ for $i\ne j$ is a rank $2$ perturbation. The resulting change in $m_N^{(i)}(z)$   is $O(1/N)$ by the rank inequality. Indeed denote by $F_A$ be the empirical spectral distribution of a Hermitian matrix $A.$ Then, if $A$ and $B$ are both Hermitian with $\text{rank}(A-B)\leq r$  $\sup_x |F_A(x)-F_B(x)|\leq r/p$ .  Hence the resulting change in $\psi_i$ is $O(1/N)$. For $i=j$, $\psi_j$ changes by $O(1)$ due to $w_j$. McDiarmid's inequality then gives, for $t>0$,
\begin{eqnarray}
&&\Prob\Big(\Big|\frac1N\sum_i\psi_i-\E\big[\tfrac1N\sum_i \psi_i\big]\Big|>t\Big)\le\cr
&&2\exp\left(-\frac{2t^2N^2}{N\cdot O(1)}\right)=2\exp(-c\,Nt^2),
\end{eqnarray}
which gives \eqref{eq:L6}. 
\end{proof}

One has that $(m_N(z))_N$ is bounded (by $1/\eta$), for each fixed $z\in\C^+$. Hence it is tight. One can similarly show by McDiarmid's inequality that $m_n(z)-\mathbb{E}m_N(z)\to 0 $ in probability. Thus every limit in law of $m_N$ is deterministic.  Fix now a subsequence along which $m_N^{(1)}(z)\to\tilde m(z)$ in probability, for some limit $\tilde m(z)$ with $|\tilde m(z)|\le1/\eta$. 

Our next aim is to compute $\lim_N\E[ \psi_1]$.
\begin{lemma}\label{lem:8}
Along such a subsequence,
\begin{equation}\label{eq:dagger}
\E[ \psi_1]\ \longrightarrow\ \tilde m(z)\int\frac{t\,dH(t)}{1+\gamma t\tilde m(z)}.
\end{equation}
\end{lemma}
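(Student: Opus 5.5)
The plan is to exploit the fact that $\psi_1$ depends on $w_1$ and on $m_N^{(1)}(z)$, and that these two quantities are (asymptotically) decoupled. $m_N^{(1)}(z)$ is a function of $(Y_j)_{j\ne1}$ only, hence independent of $Y_1$, and in particular of $w_1=\phi(Y_1)$. I would therefore write
\[
\E[\psi_1]=\E\Big[\,\E\big[h_N(w_1,m_N^{(1)}(z))\,\big|\,\mathcal F^{(1)}\big]\Big],\qquad h_N(w,x):=\frac{wx}{1+w\gamma_Nx},
\]
so that the inner conditional expectation equals $\int h_N(t,m_N^{(1)}(z))\,dH_N(t)$, where $H_N$ is the law of $w_1$. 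This reduces the lemma to three limit steps.

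First, I will replace $\gamma_N$ by $\gamma$ and $m_N^{(1)}(z)$ by $\tilde m(z)$ inside $h_N$. For this I need a uniform lower bound on the denominator. Lemma~\ref{lem:4} gives $|1+t\gamma_N x|\ge\eta/|z|$ for every $t\ge0$ whenever $x$ is the Stieltjes transform at $z$ of a probability measure on $\R_+$. In particular this holds for $x=m_N^{(1)}(z)$, and for every $t$ in $[a_{min},a_{max}]$ (not only at $t=w_1$). The same argument applies to the limit $\tilde m(z)$, since $\Imag(z\tilde m(z))\ge0$ is preserved under limits in probability. On the compact set $[a_{min},a_{max}]\times\{|x|\le1/\eta\}$, restricted to the region where the denominator is at least $\eta/(2|z|)$, the map $(t,x,\gamma')\mapsto tx/(1+t\gamma'x)$ is uniformly Lipschitz in $(x,\gamma')$. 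Hence
\[
\sup_{t\in[a_{min},a_{max}]}\big|h_N(t,m_N^{(1)}(z))-h_\gamma(t,\tilde m(z))\big|\le C_{\eta,z}\big(|m_N^{(1)}(z)-\tilde m(z)|+|\gamma_N-\gamma|\big),
\]
and the right-hand side tends to $0$ in probability along the subsequence. Since everything is bounded by $a_{max}|z|/(\eta^2)$, dominated convergence turns this into convergence of expectations.

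Second, I will pass from $H_N$ to $H$. The function $t\mapsto h_\gamma(t,\tilde m(z))=t\tilde m/(1+\gamma t\tilde m)$ is continuous and bounded on $[a_{min},a_{max}]$, because the denominator is bounded below there. Since $w_1\Rightarrow H$ by Assumption~\ref{ass:uni}, $\int h_\gamma(t,\tilde m)\,dH_N(t)\to\int h_\gamma(t,\tilde m)\,dH(t)$. The quantity $\tilde m(z)$ is deterministic, because limits of $m_N$ are deterministic and $m_N^{(1)}-m_N=O(1/N)$. It can therefore be factored out, which gives exactly $\tilde m(z)\int t\,dH(t)/(1+\gamma t\tilde m(z))$.

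I expect the main obstacle to be the denominator control in the limit, namely ensuring that $1+\gamma t\tilde m(z)$ stays away from $0$ uniformly in $t$. I would handle it by the imaginary-part argument of Lemma~\ref{lem:4}: every $m_N^{(1)}(z)$ satisfies $\Imag\big(z(1+t\gamma_N m_N^{(1)})\big)\ge\eta$, this inequality is closed and so passes to $\tilde m$, and it yields $|1+\gamma t\tilde m(z)|\ge\eta/|z|$. A secondary point is that the weights depend on $N$ through $\phi_N$. This is harmless, because only the law $H_N$ of $w_1$ enters and the test function is fixed and continuous.
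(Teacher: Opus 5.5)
Your proposal is correct and follows essentially the paper's argument: $w_1$ is exactly independent of $m_N^{(1)}(z)$, and the map $(t,x)\mapsto tx/(1+\gamma tx)$ is bounded and continuous where its denominator stays away from zero, so you can pass to the limit in $\E[\psi_1]$ using $w_1\Rightarrow H$ and $m_N^{(1)}(z)\to\tilde m(z)$ in probability. Your conditioning-plus-Lipschitz version is more careful than the paper's one-line continuous-mapping argument: it explicitly justifies the uniform bound $|1+\gamma t\,\tilde m(z)|\ge \eta/|z|$ for all $t\ge 0$, the replacement of $\gamma_N$ by $\gamma$, and the $N$-dependence of the law of $w_1$.
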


\begin{proof}
Since $m_N^{(1)}(z)$ is $\mathcal F^{(1)}$-measurable and $w_1$ is $\sigma(Y_1)$-measurable, $w_1\perp\!\!\!\perp m_N^{(1)}(z)$ exactly. This implies that 
$(w_1, m_N^{(1)}(z))\Longrightarrow (w, \tilde m(z))$ where $w\sim H$ is independent of $\tilde m(z)$. 
Note that $h(w, m)$ is a bounded continuous function if $m$ is the Stieltjes transform of a probability distribution $\nu$ and $w$ is bounded.  
From the above, we deduce that 
$$\E (\psi_1)\to \int \frac{t \tilde m(z)}{1+\gamma t \tilde m(z)}dH(t).$$ 
%The fact that $\tilde m(z)$
 %is deterministic then allows to conclude. 
\end{proof}
\subsection{Fixed-point argument and conclusion}

\begin{proof}We now show Theorem~\ref{thm: uni}.
 Along a subsequence, by \eqref{eq:L3}, one  has that $m_N^{(1)}(z)\to\tilde m(z)$ in probability. 
By exchangeability of $Y_1,\dots,Y_N$, one has that $\E[\frac1N\sum_i\psi_i]=\E[\psi_1]$. Combining Lemma~\ref{lem:6} and Lemma~\ref{lem:8}, one gets that
\[
\frac1N\sum_i\psi_i\ \xrightarrow{P}\ \tilde m(z)\int\frac{t\,dH(t)}{1+\gamma t\tilde m(z)}.
\]
Substituting into \eqref{eq:starstar}, one obtains that
\[
z\,\tilde m(z)=\tilde m(z)\int\frac{t\,dH(t)}{1+\gamma t\tilde m(z)}-1,
\]
that is,
\[
-\frac1{\tilde m(z)}=z-\int\frac{t\,dH(t)}{1+\gamma t\tilde m(z)}\quad\Longleftrightarrow\quad \tilde m(z)=-\Big(z-\int\frac{t\,dH(t)}{1+ \gamma t\tilde m(z)}\Big)^{-1}.
\]
Moreover $\tilde m(z)$ arises as a subsequential limit (in probability, hence along a further a.s. convergent subsequence) of $m_N(z)$, itself the Stieltjes transform of the probability measure $\mu_{M_N}$. One has that $|M_N|_{\op}\leq C_0$ with probability tending to $1$ by Theorem 4.4.3 in \cite{Vershynin}. By tightness, the class of Stieltjes transforms of probability measures on $\R$ is closed under such limits: $\tilde m(z)$ is itself the Stieltjes transform of a probability measure. 

One now uses the classical uniqueness of the solution in $\C^+$ of the fixed-point equation (\ref{eq:main}) (see \cite{SilversteinChoi1995}, \cite{BaiSilverstein}). Since every subsequence of $(m_N(z))_N$ has a further subsequence converging in probability to the same limit $m(z)$, the whole sequence converges: $m_N(z)\xrightarrow{P}m(z)$, for every fixed $z\in\C^+$. 

One can then use standard arguments to conclude that $\mu_{M_N}\to\mu$ weakly in probability.
\end{proof}

\end{document}